\providecommand{\U}[1]{\protect\rule{.1in}{.1in}}
\newcommand {\R} {\mathbb R}
\newcommand {\N} {\mathbb N}
\def\es{\varnothing}
\newcommand{\al}{\alpha}
\newcommand {\dom} {{\rm dom}\,}
\newcommand {\epi} {{\rm epi}\,}
\newcommand {\gph} {{\rm gph}\,}
\newcommand{\AK}[1]{\todo[inline]{AK {#1}}}
\newcommand{\iv}{^{-1} }
\newcommand{\la}{\lambda}
\newcommand{\bx}{\bar x}
\newcommand{\by}{\bar y}
\newcommand{\ang}[1]{\left\langle #1 \right\rangle}
\newcommand{\qdtx}[1]{\quad\mbox{#1}\quad}
\def\lsc{lower semicontinuous}
\newcommand {\sd} {\partial}
\def\RHS{right-hand side}
\def\LHS{left-hand side}
\newcounter{mycount}
\def\cnta{\setcounter{mycount}{\value{enumi}}}
\def\cntb{\setcounter{enumi}{\value{mycount}}}
\newcommand {\Erq} {{\rm er_q}\,}
\newcommand {\clmq} {{\rm clm_q}\,}
\newcommand {\srgq} {{\rm srg_q}\,}
\newcommand {\srpq} {{\rm shrp_q}\,}
\begin{document}

\title{Isolated calmness and sharp minima via H\"older graphical derivatives
\thanks{The research of the first and second authors was supported by the
Australian Research Council, project DP160100854. The first author benefited
from the support of the European Union's Horizon 2020 research and innovation
programme under the Marie Sk{\l }odowska--Curie Grant Agreement No. 823731
CONMECH. The research of the second author was also supported by MINECO of
Spain and FEDER of EU, grant MTM2014-59179-C2-1-P. The research of the third
author was supported by the Research Grants Council of Hong Kong (RGC Ref No.
15216518). The research of the fourth author was supported by the National
Natural Science Foundation of China, grants 11801497 and 11771384. } }
\author{Alexander Y. Kruger
\and Marco A. L\'{o}pez
\and Xiaoqi Yang
\and Jiangxing Zhu }

\institute{
Alexander Y. Kruger \at
Centre for Informatics and Applied Optimization, Federation University Australia, Ballarat, Australia\\
\email{a.kruger@federation.edu.au}
\and
Marco A. L\'{o}pez (\Letter\,) \at
Department of Mathematics, University of Alicante, Alicante, Spain, and Centre for Informatics and Applied Optimization, Federation University Australia, Ballarat, Australia\\
\email{marco.antonio@ua.es}
\and
Xiaoqi Yang \at
Department of Applied Mathematics, The Hong Kong Polytechnic University, Hong Kong\\
\email{xiao.qi.yang@polyu.edu.hk}
\and
Jiangxing Zhu \at
Department of Mathematics, Yunnan University, Kunming 650091, China\\
\email{jiangxingzhu@yahoo.com}
} \dedication{Dedicated to the 70th birthday of our colleague and friend Miguel Goberna}
\maketitle

\begin{abstract}
The paper utilizes H\"{o}lder graphical derivatives for characterizing
H\"older strong subregularity, isolated calmness and sharp minimum. As
applications, we characterize H\"older isolated calmness in linear
semi-infinite optimization and H\"older sharp minimizers of some penalty
functions for constrained optimization.
\end{abstract}

\keywords{H\"older subregularity \and H\"older calmness \and H\"older sharp minimum \and H\"older graphical derivatives \and Semi-infinite programming}

\subclass{49J53 \and 90C25 \and 90C31 \and 90C34}


\section{Introduction}

This paper continues our previous work \cite{KruLopYanZhu19} and utilizes
H\"{o}lder graphical derivatives (sometimes referred to as Studniarski
derivatives) for characterizing certain regularity properties of set-valued
mappings and real-valued functions.

In the next Section~\ref{S2}, we discuss $q$-order ($q>0$) positively
homogeneous mappings and $q$-order graphical (contingent) derivatives. The
definitions and statements mostly follow the corresponding linear ones in
\cite{DonRoc14}. Two norm-like quantities are used for quantifying H\"{o}lder
graphical derivatives. One of them is a generalization of the well-known
\emph{outer norm} of a positively homogeneous mapping, while the other seems
new and allows to simplify some statements (and proofs) even in the linear case.

In Section~\ref{S3}, H\"older graphical derivatives are used for
characterizing H\"older strong subregularity, isolated calmness and sharp
minimum. In particular, we give characterizations of H\"older sharp minimizers
in terms of H\"{o}lder graphical derivatives of the subdifferential mapping.
The characterizations from Section~\ref{S3} are used in Sections~\ref{S4} and
\ref{S5} to characterize H\"older isolated calmness in linear semi-infinite
optimization and sharp minimizers of $\ell_{p}$ penalty functions, respectively.

Our basic notation is standard, see, e.g., \cite{RocWet98,DonRoc14}.
Throughout the paper, $X$ and $Y$ are normed spaces. We use the same notation
$\|\cdot\|$ for norms in all spaces. If not explicitly stated otherwise,
products of normed spaces are assumed equipped with the maximum norms, e.g.,
$\|(x,y)\|:=\max\{\|x\|,\|y\|\}$, {$(x,y)\in X\times Y$.} If $X$ is a normed
space, its topological dual is denoted by $X^{*}$, while $\langle\cdot
,\cdot\rangle$ denotes the bilinear form defining the pairing between the two
spaces. Symbols $\mathbb{R}$, $\mathbb{R}_{+}$ and $\mathbb{N}$ denote the
sets of all real numbers, all nonnegative real numbers and all positive
integers, respectively. For the empty subset of $\mathbb{R}_{+}$, we use the
conventions $\sup\emptyset=0$ and $\inf\emptyset=+\infty$. Given an $\alpha
\in\mathbb{R}$, we denote $\alpha_{+}:=\max\{0,\alpha\}$.

For an extended-real-valued function $f:X\to\mathbb{R}\cup\{+\infty\}$, its
domain and epigraph are defined, respectively, by $\mathrm{dom}\, f:=\{x \in
X\mid{f(x) < +\infty}\}$ and $\mathrm{epi}\, f:=\{(x,\alpha) \in X
\times\mathbb{R}\mid{f(x) \le\alpha}\}$. A set-valued mapping
$F:X\rightrightarrows Y$ between two sets $X$ and $Y$ is a mapping, which
assigns to every $x\in X$ a subset (possibly empty) $F(x)$ of $Y$. We use the
notations $\mathrm{gph}\, F:=\{(x,y)\in X\times Y\mid y\in F(x)\}$ and
$\mathrm{dom}\,\: F:=\{x\in X\mid F(x)\ne\emptyset\}$ for the graph and the
domain of $F$, respectively, and $F^{-1}: Y\rightrightarrows X$ for the
inverse of $F$. This inverse (which always exists with possibly empty values
at some $y$) is defined by $F^{-1}(y) :=\{x\in X \mid y\in F(x)\}$, $y\in Y$.
Obviously $\mathrm{dom}\, F^{-1}=F(X)$.

Recall that a mapping $F:X\rightrightarrows Y$ is \emph{outer semicontinuous}
(cf., e.g., \cite{DonRoc14}) at $x\in X$ if
\[
\limsup_{u\to x}F(u)\subset F(x),
\]
i.e., if $\mathrm{gph}\, F\ni(x_{k},y_{k})\to(x,y)$, then $y\in F(x)$. This is
always the case when $\mathrm{gph}\, F$ is closed.

Throughout the paper, we assume the order of all H\"{o}lder properties to be
determined by a fixed number $q>0$.

\section{H\"{o}lder graphical derivatives}

\label{S2}

In this section, we discuss H\"{o}lder
positively homogeneous mappings
and H\"{o}lder versions of the ubiquitous graphical (contingent) derivatives;
cf. \cite{AubFra90,RocWet98,DonRoc14,Iof17}.

\begin{definition}\label{D1.1}
A mapping $H:X\rightrightarrows Y$ is $q$-order positively homogeneous whenever
\begin{equation*}
0\in H(0)\quad \mbox{and}\quad H(\lambda x)=\lambda^q H(x) \;\;\text{for all}\;\;  x\in X\;\text{and }\lambda>0.
\end{equation*}
\end{definition}
\if{\AK{10/01/21.
Shall we require $H(0)=\{0\}$?}
}\fi

If $q=1$, we simply say the $H$ is positively homogeneous. The graph of a
positively homogeneous mapping is a cone. This is obviously not the case when
$q\ne1$.

The next simple fact is a direct consequence of the definition.

\begin{proposition}
Let $H:X\rightrightarrows Y$ be $q$-order positively homogeneous.
Then $H\iv:Y\rightrightarrows X$ is $\frac{1}{q}$-order positively homogeneous.
\end{proposition}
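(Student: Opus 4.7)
The plan is to verify directly the two defining conditions of Definition~\ref{D1.1} for the mapping $H^{-1}$ with exponent $1/q$. The condition $0\in H^{-1}(0)$ is, by definition of the inverse, nothing but $0\in H(0)$, which is part of the hypothesis on $H$. So the only real content is the homogeneity identity
\[
H^{-1}(\mu y)=\mu^{1/q}H^{-1}(y)\qquad\text{for all }y\in Y,\ \mu>0.
\]

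To establish this, I would fix $\mu>0$ and $y\in Y$ and set $\lambda:=\mu^{1/q}>0$, so that $\lambda^{q}=\mu$. The scaling law for $H$ gives the set equality $H(\lambda x')=\lambda^{q}H(x')$ for every $x'\in X$, i.e.\ $y'\in H(\lambda x')\Leftrightarrow y'/\lambda^{q}\in H(x')$. Combining this with the definition of the inverse, one gets the chain of equivalences
\[
x\in H^{-1}(\mu y)\;\Longleftrightarrow\;\mu y\in H(x)\;\Longleftrightarrow\;\lambda^{q}y\in H(\lambda\cdot(x/\lambda))\;\Longleftrightarrow\;y\in H(x/\lambda)\;\Longleftrightarrow\;x/\lambda\in H^{-1}(y),
\]
which is precisely the statement $x\in\lambda H^{-1}(y)=\mu^{1/q}H^{-1}(y)$, delivering both inclusions simultaneously.

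There is no genuine obstacle here; the result is essentially a formal consequence of the definition. The only point to be careful about is that the homogeneity identity in Definition~\ref{D1.1} is postulated only for positive scalars, so the inversion via $\lambda=\mu^{1/q}$ is legitimate exactly because $\mu>0$, while the base case $\mu=0$ is covered separately by the hypothesis $0\in H(0)$.
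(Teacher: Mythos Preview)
Your proof is correct and follows essentially the same direct verification of Definition~\ref{D1.1} as the paper's (suppressed) argument. The only cosmetic difference is that you package both inclusions into a single chain of equivalences via the substitution $\lambda=\mu^{1/q}$, whereas the paper proves $\lambda^{1/q}H^{-1}(y)\subset H^{-1}(\lambda y)$ first and then recovers the reverse inclusion by replacing $(y,\lambda)$ with $(\lambda y,\lambda^{-1})$.
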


\if{
\begin{proof}
Obviously $0\in H\iv(0)$.
Let $y\in Y$ and $\la>0$.
Take an arbitrary
$x\in H\iv(y)$.
Then $y\in H(x)$, and consequently, $\la y
\in H(\la^{\frac{1}{q}}x)$.
Thus, $\la^{\frac{1}{q}}x\in H\iv(\la y)$.
It follows that $\la^{\frac{1}{q}} H\iv(y)\subset H\iv(\la y)$.
Since $y\in Y$ and $\la>0$ are arbitrary, the latter inclusion with $\la y$ and $\la\iv$ in place of $y\in Y$ and $\la$, respectively, produces the opposite inclusion $H\iv(\la y) \subset\la^{\frac{1}{q}} H\iv(y)$.
Hence, $H\iv(\la y)=\la^{\frac{1}{q}} H\iv(y)$.
\qed\end{proof}
}\fi

For a $q$-order positively homogeneous mapping $H:X\rightrightarrows Y$, we
define two norm-li\-ke quantities:
\begin{gather}
\label{1.1}\|H\|^{+}_{q}:=\sup\limits_{(x,y)\in\mathrm{gph}\, H\setminus
\{(0,0)\}} \frac{\|y\|}{\|x\|^{q}},\quad\|H\|^{\circleddash}_{q}%
:=\inf\limits_{(x,y)\in\mathrm{gph}\, H\setminus\{(0,0)\}} \frac
{\|y\|}{\|x\|^{q}}.
\end{gather}
When $q=1$,
the first one reduces to the \emph{outer norm} $\|H\|^{+}$ of $H$; cf.
\cite[p.~364]{RocWet98}, \cite[p.~218]{DonRoc14}. Note that
$\|H\|^{\circleddash}_{1}\le\|H\|^{-}$, where $\|H\|^{-}$ is the \emph{inner
norm} of $H$, and the inequality can be strict. None of the quantities in
\eqref{1.1} is actually a true ``norm''; see the comments in \cite{DonRoc14}.

\begin{proposition}\label{P1.3}
Let $H:X\rightrightarrows Y$ be $q$-order positively homogeneous.
\begin{enumerate}
\item
$\|H\|^+_q=\sup_{\|y\|=1}d(0,H\iv(y))^{-q}$,\quad $\|H\|^\circleddash_q=\inf_{\|x\|=1}d(0,H(x))$.
\item
$\|H\|^\circleddash_q=\left(\|H\iv\|^+_\frac{1}{q}\right)^{-q}$.
\item
If $\gph H\ne\{(0,0)\}$, then $\|H\|^\circleddash_q\le\|H\|^+_q$.
\item
If $\gph H=\{(0,0)\}$, then $\|H\|^+_q=0$ and $\|H\|^\circleddash_q=+\infty$.
\item
$\|H\|^+_q=0$ if and only if $H(X)=\{0\}$.
\item
$\|H\|^\circleddash_q=+\infty$ if and only if $\dom H=\{0\}$.
\item
$\|H\|^+_q<+\infty\;\;\Longrightarrow\;\;H(0)=\{0\}$.
If $\dim Y<\infty$ and $H$ is outer semicontinuous at $0$, the two conditions are equivalent.
\item
$\|H\|^\circleddash_q>0\;\;\Longrightarrow\;\;H\iv(0)=\{0\}$.
If $\dim X<\infty$ and $H\iv$ is outer semicontinuous at $0$, the two conditions are equivalent.
\end{enumerate}
\end{proposition}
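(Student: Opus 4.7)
My plan is to handle the eight items in a grouped order that exploits the natural dependencies between them: first the normalization identities (i)--(ii), then the tautological inequalities and boundary cases (iii)--(vi), then the ``one direction is easy, converse uses compactness'' pair (vii)--(viii).

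For (i), the idea is to rescale along the one-parameter orbits given by $q$-order positive homogeneity. Given $(x,y)\in\gph H$ with $y\ne 0$, the point $(\lambda x,\lambda^q y)$ also lies in $\gph H$ for every $\lambda>0$. Choosing $\lambda=\|y\|^{-1/q}$ normalizes the second coordinate to have unit norm and the ratio $\|y\|/\|x\|^q$ becomes $1/\|x'\|^q$ where $x'\in H\iv(y/\|y\|)$. Supremizing over all such $x'$ gives $d(0,H\iv(y/\|y\|))^{-q}$, and then over all unit $y$ (using the conventions on $\emptyset$ for $y\notin H(X)$) yields the claimed formula. The argument for $\|H\|^\circleddash_q$ is symmetric: normalize $\|x\|=1$ with $\lambda=\|x\|\iv$, read off $\inf_{y\in H(x)}\|y\|=d(0,H(x))$. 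For (ii), raising $\|H\|^\circleddash_q=\inf \|y\|/\|x\|^q$ to the power $1/q$ converts it to $\inf \|y\|^{1/q}/\|x\|$, and by Proposition~2 this is the reciprocal of $\sup \|x\|/\|y\|^{1/q}=\|H\iv\|^+_{1/q}$.

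Items (iii)--(vi) follow from the definitions and the paper's conventions $\sup\emptyset=0$, $\inf\emptyset=+\infty$. Specifically: (iii) is simply ``$\inf\le\sup$ over a nonempty set''; (iv) reads off the conventions when the index set is empty; (v) uses that $\|y\|/\|x\|^q=0$ forces $y=0$ for $x\ne 0$, while any nonzero $y\in H(0)$ would make the ratio $+\infty$ (so $\|H\|^+_q=0$ means all values of $H$ are $0$, and conversely); (vi) uses the dual observation that $y\in H(x)$ with $x\ne 0$ produces a finite ratio, hence $\|H\|^\circleddash_q<+\infty$, while conversely $\dom H=\{0\}$ leaves only pairs $(0,y)$ with $y\ne 0$ in $\gph H\setminus\{(0,0)\}$, each giving ratio $+\infty$.

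The one nontrivial step is the converse in (vii). The forward implication is the same boundary trick: a nonzero element of $H(0)$ produces a pair $(0,y)\in\gph H\setminus\{(0,0)\}$ with infinite ratio. For the converse, assume $H(0)=\{0\}$, $\dim Y<\infty$, and $H$ outer semicontinuous at $0$. Suppose for contradiction that $\|H\|^+_q=+\infty$; then by (i) there exist $y_k$ with $\|y_k\|=1$ and $x_k\in H\iv(y_k)$ with $\|x_k\|\to 0$. Finite-dimensionality of $Y$ lets us extract a subsequence with $y_k\to y$, $\|y\|=1$; outer semicontinuity combined with $x_k\to 0$ and $y_k\in H(x_k)$ forces $y\in H(0)=\{0\}$, contradicting $\|y\|=1$. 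Item (viii) is then an automatic consequence: by Proposition~2, $H\iv$ is $(1/q)$-order positively homogeneous, so applying (vii) to $H\iv$ yields $H\iv(0)=\{0\}\Leftrightarrow\|H\iv\|^+_{1/q}<+\infty$ (under the stated finite-dimension and outer semicontinuity hypotheses), and formula (ii) converts this equivalence into the one claimed in (viii); the forward direction of (viii) without the extra hypotheses is again the direct boundary observation that $x\in H\iv(0)$ with $x\ne 0$ produces ratio $0$.

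The compactness step in (vii) is the main obstacle: everything else is either a rescaling bookkeeping exercise or a direct reading of the conventions. I expect the write-up will be short, with (i)--(ii) carrying most of the technical weight and (vii)--(viii) sharing a single compactness argument.
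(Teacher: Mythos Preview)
Your proposal is correct and follows essentially the same approach as the paper: items (i)--(vi) are handled by rescaling and the stated conventions, and the converse in (vii) is obtained by the same compactness argument (normalize, extract a convergent subsequence in the finite-dimensional space, and invoke outer semicontinuity). Your reduction of (viii) to (vii) via Proposition~2 and item (ii) is a small economy over the paper, which instead repeats the compactness argument directly for (viii), but the underlying idea is identical.
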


\if{\AK{14/03/21.
Definition of outer/upper semicontinuity to be given in the Introduction.
Outer semicontinuity is weaker than closedness of the graph.
}}\fi

\begin{proof}
Assertions (i)--(vi) and the first parts of assertions (vii) and (viii) are direct consequences of \eqref{1.1} and Definition~\ref{D1.1}.
For instance, in the case of assertion (ii) using definitions \eqref{1.1} we have:
\begin{gather*}
\|H\|^\circleddash_q =\left(\sup\limits_{(x,y)\in\gph H\setminus\{(0,0)\}} \frac{\|x\|}{\|y\|^\frac{1}{q}}\right)^{-q} =\left(\|H\iv\|^+_\frac{1}{q}\right)^{-q}.
\end{gather*}

To prove the second part of (vii), we need to show that, under the assumptions, $\|H\|^+_q=+\infty\;\;\Longrightarrow\;\;H(0)\ne\{0\}$.
Let $\|H\|^+_q=+\infty$.
By \eqref{1.1}, there exists a sequence $(x_k,y_k)\in\gph H$ $(k\in\N)$ such that $\|y_k\|/\|x_k\|^q\to+\infty$ as $k\to\infty$.
Without loss of generality, $y_k\ne0$ for all $k\in\N$.
Set $u_k:=x_k/\|y_k\|^\frac{1}{q}$, $v_k:=y_k/\|y_k\|$ $(k\in\N)$.
Then $u_k\to0$ as $k\to\infty$ and $\|v_k\|=1$ $(k\in\N)$.
Without loss of generality, $v_k\to v$ as $k\to\infty$ and $\|v\|=1$.
Furthermore, by Definition~\ref{D1.1}, $(u_k,v_k)\in\gph H$ $(k\in\N)$ and, thanks to the outer semicontinuity of $H$, $v\in H(0)$.

The proof of the second part of (viii) is similar.
Let $\dim X<\infty$, $H$ is outer semicontinuous at $0$, and
$\|H\|^\circleddash_q=0$.
By \eqref{1.1}, there exists a sequence $(x_k,y_k)\in\gph H$ $(k\in\N)$ such that $\|y_k\|/\|x_k\|^q\to0$ as $k\to\infty$.
Without loss of generality, $x_k\ne0$ for all $k\in\N$, and
$u_k:=x_k/\|x_k\|\to u$ with $\|u\|=1$, while $v_k:=y_k/\|x_k\|^{q}\to0$.
By Definition~\ref{D1.1}, $(u_k,v_k)\in\gph H$ $(k\in\N)$ and, thanks to the outer semicontinuity of $H\iv$, $u\in H\iv(0)$.
Hence,
$H\iv(0)\ne\{0\}$.
\qed\end{proof}

Assertions (i), (v), (vii) and (viii) in Proposition~\ref{P1.3} generalize and
expand the corresponding parts of \cite[Propositions~4A.6 and 5A.7, and
Exercise~4A.9]{DonRoc14}. The above proof of the second part of (vii) largely
follows that of the corresponding part of \cite[Proposition~4A.6]{DonRoc14}.

Next we briefly consider the case $Y=X^{*}$.

\begin{definition}\label{D2.4}
A mapping $H:X\rightrightarrows X^*$ is \emph{$q$-or\-der positively definite} if there exists a number $\la>0$ such that
$$\ang{x^*,x}\ge\la\|x\|^{q+1}\qdtx{for all}
(x,x^*)\in\gph H.$$
The exact upper bound of all such $\la>0$ is denoted by $\|H\|^*_q$.
\end{definition}

In Definition~\ref{D2.4}, it obviously holds
\begin{align}
\label{1.4}\|H\|^{*}_{q}=\inf\limits_{(x,x^{*})\in\mathrm{gph}\, H,\,x\ne0}
\frac{\left\langle x^{*},x \right\rangle _{+}}{\|x\|^{q+1}}.
\end{align}
In general, the expression in \eqref{1.4} is nonnegative, and the case
$\|H\|^{*}_{q}=0$ means that $H$ is not $q$-or\-der positively definite.

\begin{proposition}\label{P3.12}
Let $H:X\rightrightarrows X^*$.
\begin{enumerate}
\item
$\|H\|^*_q=+\infty$ if and only if $\dom H\subset\{0\}$.
\item
If $H$ is $q$-order positively homogeneous, then $\|H\|^*_q\le\|H\|^\circleddash_q$.
\item
If $H$ is $q$-order positively homogeneous and $p$-order positively definite with some $p>0$, then either $\dom H=\{0\}$ or $p=q$.
\end{enumerate}
\end{proposition}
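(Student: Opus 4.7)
The plan is to handle the three parts in order, using formula \eqref{1.4}, the $q$-order positive homogeneity identity, and a single duality bound; the only nontrivial piece is part (iii), and my approach there will be a scaling argument.

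For (i), I would simply unpack \eqref{1.4}: the infimum is taken over the set $\{(x,x^*)\in\gph H:x\ne0\}$, which is empty exactly when $\dom H\subset\{0\}$. Combined with the convention $\inf\emptyset=+\infty$, this yields the stated equivalence in one line.

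For (ii), I would apply the duality estimate $\ang{x^*,x}_+\le\|x^*\|\|x\|$ pointwise on $\gph H$ and, for $x\ne0$, divide by $\|x\|^{q+1}$ to obtain
\[
\frac{\ang{x^*,x}_+}{\|x\|^{q+1}}\le\frac{\|x^*\|}{\|x\|^q}.
\]
Taking infima over the common index set $\{(x,x^*)\in\gph H:x\ne0\}$ yields $\|H\|^*_q\le\|H\|^\circleddash_q$; one only has to note that, by \eqref{1.1}, pairs $(0,x^*)$ with $x^*\ne0$ contribute the value $+\infty$ to the right-hand infimum and are thus harmless.

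Part (iii) is the core. Assume $\dom H\ne\{0\}$ and fix $(x_0,x_0^*)\in\gph H$ with $x_0\ne0$. By $q$-order positive homogeneity, $(tx_0,t^qx_0^*)\in\gph H$ for every $t>0$, so the defining inequality of $p$-order positive definiteness applied along this scaled curve produces
\[
t^{q+1}\ang{x_0^*,x_0}\ge\la t^{p+1}\|x_0\|^{p+1}\qdtx{for every}t>0.
\]
Since $\ang{x_0^*,x_0}\ge\la\|x_0\|^{p+1}>0$, this forces $t^{q-p}$ to stay bounded below by a fixed positive constant on $(0,+\infty)$, which is impossible unless $q=p$; letting $t\to0^+$ when $q>p$ or $t\to+\infty$ when $q<p$ delivers the contradiction. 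This scaling step is the main (though mild) obstacle in the proposition; parts (i) and (ii) are essentially bookkeeping with the definitions.
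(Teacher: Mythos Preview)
Your proposal is correct and follows essentially the same approach as the paper. Parts (i) and (ii) are handled exactly as the paper does (immediate from \eqref{1.4} and comparison with \eqref{1.1}), and your scaling argument in (iii) is the same idea the paper uses---the paper just packages it as the identity $\|H\|^*_p=\inf_{\lambda>0}\lambda^{q-p}\|H\|^*_p$ rather than tracking a single element $(x_0,x_0^*)$, but the content is identical.
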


\begin{proof}
\begin{enumerate}
\item
is immediate from \eqref{1.4}.
\item
follows from comparing \eqref{1.4} and the second definition in \eqref{1.1}.
\item
Let $H$ be $q$-order positively homogeneous and $p$-order positively definite with some $p>0$.
Then $0\in\dom H$, $\|H\|^*_p>0$ and,
by Definition~\ref{D1.1}, $(x,x^*)\in\gph H$ if and only if $(\la x,\la^qx^*)\in\gph H$ for any $\la>0$, and it follows from \eqref{1.4} that
\begin{align*}
\|H\|^*_p=\inf\limits_{(x,x^*)\in\gph H,\,x\ne0,\,\la>0} \frac{\ang{\la^qx^*,\la x}_+}{\|\la x\|^{p+1}} =\inf\limits_{\la>0} \la^{q-p}\|H\|^*_p.
\end{align*}
Thus, either $\|H\|^*_p=+\infty$, i.e. $\dom H=\{0\}$, or $p=q$.
\qed\end{enumerate}
\end{proof}

Given a set-valued mapping $H:X\rightrightarrows Y$ and a function $f:X\to Y$,
their sum $H+f$ is a set-valued mapping from $X$ to $Y$ defined by
\[
(H+f)(x):=H(x)+f(x)=\{y+f(x)\mid y\in H(x)\},\quad x\in X.
\]
Note that $\mathrm{dom}\,(H+f)=\mathrm{dom}\, H\cap\mathrm{dom}\, f$.

The next statement characterizes perturbed positively homogeneous mappings. It
generalizes \cite[Theorem~5A.8]{DonRoc14} (and is accompanied by a much
shorter proof).

\begin{theorem}\label{T1.4}
Let both $H:X\rightrightarrows Y$ and $f:X\to Y$ be $q$-order positively homogeneous.
Then $H+f$ is $q$-order positively homogeneous.
Moreover,
\begin{equation}\label{T1.4-1}
\|H+f\|^\circleddash_q\geq \|H\|^\circleddash_q-\|f\|^+_q.
\end{equation}
\end{theorem}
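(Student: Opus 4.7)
I would prove the two assertions in sequence, with the $q$-order positive homogeneity of $H+f$ as a warm-up. Since $f$ is single-valued and $q$-order positively homogeneous, picking any $\lambda>0$ with $\lambda\ne1$ in $f(0)=f(\lambda\cdot0)=\lambda^q f(0)$ forces $f(0)=0$. Combined with $0\in H(0)$ from Definition~\ref{D1.1}, this gives $0\in(H+f)(0)$, and the identity $(H+f)(\lambda x)=\lambda^q H(x)+\lambda^q f(x)=\lambda^q(H+f)(x)$ is immediate from the scaling laws of $H$ and $f$.

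For the inequality \eqref{T1.4-1}, my plan is to work straight from the definitions in \eqref{1.1}. If $\gph(H+f)=\{(0,0)\}$, Proposition~\ref{P1.3}(iv) gives $\|H+f\|^\circleddash_q=+\infty$ and the inequality is trivial. Otherwise I would fix an arbitrary $(x,z)\in\gph(H+f)\setminus\{(0,0)\}$, pick $y\in H(x)$ with $z=y+f(x)$, and apply the triangle inequality in the form $\|z\|\ge\|y\|-\|f(x)\|$. Dividing by $\|x\|^q$ (when $x\ne0$) and observing that $(x,y)\in\gph H\setminus\{(0,0)\}$ while $(x,f(x))\in\gph f\setminus\{(0,0)\}$, one has $\|y\|/\|x\|^q\ge\|H\|^\circleddash_q$ and $\|f(x)\|/\|x\|^q\le\|f\|^+_q$, so the ratio $\|z\|/\|x\|^q$ is bounded below by $\|H\|^\circleddash_q-\|f\|^+_q$. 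Taking the infimum over $(x,z)$ yields \eqref{T1.4-1}.

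The only delicate point is the case $x=0$ with $z\ne0$, where the ratio $\|z\|/\|x\|^q$ formally involves division by zero. Under the convention $\alpha/0=+\infty$ (for $\alpha>0$) implicit in \eqref{1.1}, such points contribute $+\infty$ to the infimum and hence cannot depress it below the claimed bound. Degenerate scenarios where $\|f\|^+_q=+\infty$ make the right-hand side of \eqref{T1.4-1} non-positive and the inequality trivial, while $\|H\|^\circleddash_q=+\infty$ forces $\dom H\subset\{0\}$ by Proposition~\ref{P1.3}(vi) and hence $\dom(H+f)\subset\{0\}$, giving $\|H+f\|^\circleddash_q=+\infty$ for the same reason. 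This uniform absorption of edge cases into the infimum is precisely why the argument should be noticeably shorter than the one for \cite[Theorem~5A.8]{DonRoc14}, where the linear case is split into several subcases around $H(0)=\{0\}$.
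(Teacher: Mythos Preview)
Your proof is correct and follows essentially the same approach as the paper: both arguments pick a point $(x,z)\in\gph(H+f)$ with $x\ne0$, write $z=y+f(x)$ with $y\in H(x)$, apply the triangle inequality $\|z\|\ge\|y\|-\|f(x)\|$, divide by $\|x\|^q$, and bound the two terms by $\|H\|^\circleddash_q$ and $\|f\|^+_q$ from \eqref{1.1}. The only cosmetic differences are in the bookkeeping of trivial cases---the paper dispatches them in one line via $\dom H\cap\dom f=\{0\}$ and Proposition~\ref{P1.3}(vi), whereas you split off $\gph(H+f)=\{(0,0)\}$ via Proposition~\ref{P1.3}(iv) and then separately discuss $x=0$, $z\ne0$ and the $\pm\infty$ scenarios---but the mathematical content is identical.
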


\begin{proof}
$H+f$ is $q$-order positively homogeneous by Definition~\ref{D1.1}.
If $\dom H\cap\dom f=\{0\}$, then $\|H+f\|^\circleddash_q=+\infty$ by Proposition~\ref{P1.3}(vi), and condition \eqref{T1.4-1} is satisfied trivially.
Let $(x,y)\in\gph H$ with $x\ne0$ and $x\in\dom f$.
Then $(x,y+f(x))\in\gph(H+f)$ and, in view of \eqref{1.1},
\begin{equation*}
\frac{\|y+f(x)\|}{\|x\|^q}\ge\frac{\|y\|}{\|x\|^q} -\frac{\|f(x)\|}{\|x\|^q}\ge\|H\|^\circleddash_q-\|f\|^+_q.
\end{equation*}
Since $(x,y+f(x))$ is an arbitrary point in $\gph(H+f)$ with $x\ne0$, the second representation in \eqref{1.1} yields condition \eqref{T1.4-1}.
\qed\end{proof}

Given a set-valued mapping $F:X\rightrightarrows Y$, its \emph{$q$-order
graphical derivative} at $(\bar x,\bar y)\in\mathrm{gph}\, F$ is a set-valued
mapping $D_{q}F(\bar x, \bar y):X\rightrightarrows Y$ defined for all $x\in X$
by
\begin{align}
\label{2.3}
D_{q} F(\bar x,\bar y)(x):=\big\{y\in Y\mid &  \exists(x_{k}%
,y_{k})\to(x,y),\;t_{k}\downarrow0\;\mbox{such that}\nonumber\\
&  (\bar x+t_{k}x_{k},\bar y+t_{k}^{q}y_{k})\in\mathrm{gph}\, F,\;\forall
k\in\mathbb{N}\big\}.
\end{align}
$D_{q} F(\bar x, \bar y)$ is sometimes referred to as \emph{$q$-order upper
Studniarski derivative} \cite[Definition 3.1]{SunLi11} of $F$ at $(\bar x,\bar
y)$. When $q=1$, it reduces to the standard graphical (contingent) derivative; cf. \cite{AubEke84,AubFra90,RocWet98,KlaKum02,DonRoc14}.
Clearly, $D_{q} F(\bar x, \bar y)$ is a $q$-order positively
homogeneous mapping with closed graph, and
\begin{gather}
D_{q} F(\bar x, \bar y)^{-1}=D_{\frac{1}{q}} F^{-1}(\bar y, \bar x),
\label{4.003}%
\end{gather}

Given a function $f:X\to Y$ and a point $\bar x\in\mathrm{dom}\, f$, we write
$D_{q}f(\bar x)$ instead of $D_{q}f(\bar x,f(\bar x))$. If $D_{q}f(\bar x)$ is
single-valued, i.e. the limit
\[
D_{q}f(\bar x)(x)=\lim_{u\to x,\;t\downarrow0} \frac{f(\bar x+tu)-f(\bar
x)}{t^{q}}%
\]
exists for all $x\in X$, we say that $f$ is \emph{$q$-order Hadamard
directionally differentiable} at~$\bar x$.

The next proposition provides a sum rule for $q$-order graphical derivatives.
It is a direct consequence of the definitions of $q$-order
graphical derivative and $q$-order Hadamard
directional differentiability.

\begin{proposition}
\label{P4.1}
Let $F:X\rightrightarrows Y$, $f:X\to Y$, $(\bx,\by)\in\gph F$ and $\bx\in\dom f$.
If $f$ is $q$-order Hadamard directionally
differentiable at $\bx$, then
\begin{equation*}
D_q(F+f)(\bx,\by+f(\bx)) =D_qF(\bx,\by)+D_qf(\bx).
\end{equation*}
\end{proposition}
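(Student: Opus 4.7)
The plan is to prove the set equality by double inclusion, working directly from the definition \eqref{2.3} of $D_q$ and from the definition of $q$-order Hadamard directional differentiability. The key observation is that, for any $x\in X$ and $(x,y)\in\gph F$, we have $(x,y)\in\gph(F+f)$ if and only if $x\in\dom f$ and $(x,y-f(x))\in\gph F$; differentiation is then reduced to adding/subtracting the increments of $f$ along the test sequences.

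For the inclusion $D_q(F+f)(\bx,\by+f(\bx))(x)\subset D_qF(\bx,\by)(x)+D_qf(\bx)(x)$, I would pick $z$ in the LHS and extract, from \eqref{2.3}, sequences $(x_k,z_k)\to(x,z)$ and $t_k\downarrow0$ with $(\bx+t_kx_k,\by+f(\bx)+t_k^qz_k)\in\gph(F+f)$. This means $\bx+t_kx_k\in\dom f$ and
\[
w_k:=z_k-\frac{f(\bx+t_kx_k)-f(\bx)}{t_k^q}
\]
satisfies $(\bx+t_kx_k,\by+t_k^qw_k)\in\gph F$. Hadamard differentiability, applied along $x_k\to x$ and $t_k\downarrow 0$, gives $(f(\bx+t_kx_k)-f(\bx))/t_k^q\to D_qf(\bx)(x)$; hence $w_k\to z-D_qf(\bx)(x)$, and the defining sequences witness $z-D_qf(\bx)(x)\in D_qF(\bx,\by)(x)$.

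For the reverse inclusion, I would fix $y\in D_qF(\bx,\by)(x)$ together with the sequences $(x_k,y_k)\to(x,y)$ and $t_k\downarrow0$ supplied by \eqref{2.3}, note that eventually $\bx+t_kx_k\in\dom f$ (since $f$ is defined on a neighbourhood of $\bx$ by virtue of Hadamard differentiability there), and set
\[
z_k:=y_k+\frac{f(\bx+t_kx_k)-f(\bx)}{t_k^q}.
\]
Again by Hadamard differentiability, $z_k\to y+D_qf(\bx)(x)$; and $(\bx+t_kx_k,\by+f(\bx)+t_k^qz_k)\in\gph(F+f)$ by construction, so $y+D_qf(\bx)(x)\in D_q(F+f)(\bx,\by+f(\bx))(x)$.

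The proof is essentially bookkeeping, and the only substantive point is justifying the convergence $(f(\bx+t_kx_k)-f(\bx))/t_k^q\to D_qf(\bx)(x)$ in both inclusions. This is precisely where the \emph{Hadamard} (rather than merely radial/Studniarski) nature of the differentiability is used: the limit must be attained along \emph{arbitrary} sequences $x_k\to x$, $t_k\downarrow 0$, and not only along $u_k\equiv x$. Once this is in place, both inclusions are immediate from the definitions.
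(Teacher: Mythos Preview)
Your proposal is correct and follows essentially the same approach as the paper: the paper states the result as ``a direct consequence of the definitions of $q$-order graphical derivative and $q$-order Hadamard directional differentiability'' (and in fact contains, commented out, a proof by double inclusion using the same auxiliary sequences $v_k:=(f(\bx+t_kx_k)-f(\bx))/t_k^q\to D_qf(\bx)(x)$ that you introduce). Your observation that Hadamard---as opposed to radial---differentiability is exactly what makes the convergence of $v_k$ work along arbitrary $x_k\to x$ is the right point to emphasize.
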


\if{
\begin{proof}
Clearly, $(\bx,\by+f(\bx))\in\gph(F+f)$.
Given an $x\in X$ and a
$y\in D_q(F+f)(\bx,\by+f(\bx))(x)$,
there exist sequences $(x_k,y_k)\to(x,y)$ and $t_k\downarrow0$ such that, for all $k\in\mathbb{N}$,
\begin{equation*}
\by+f(\bx)+t_k^q y_k \in F(\bx+t_k x_k)+f(\bx+t_kx_k),
\end{equation*}
or equivalently,
$\by+t_k^q(y_k-v_k)\in F(\bx+t_k x_k)$,
where $v_k:=(f(\bx+t_kx_k)-f(\bx))/t_k^q\to D_qf(\bx)(x)$.
Hence, $y \in  D_q F(\bx,\by)(x)+D_q f (\bx)(x)$.
Conversely,
given an $x\in X$ and a
$y\in D_qF(\bx,\by)(x)$,
there exist sequences $(x_k,y_k)\to(x,y)$ and $t_k\downarrow0$ such that, for all $k\in\mathbb{N}$,
$\by+t_k^q y_k\in F(\bx+t_k x_k)$,
or equivalently,
\begin{equation*}
\by+f(\bx)+t_k^q(y_k+v_k)\in F(\bx+t_k x_k)+f(\bx+t_kx_k),
\end{equation*}
where $v_k:=(f(\bx+t_kx_k)-f(\bx))/t_k^q\to D_qf(\bx)(x)$.
Hence, $y+D_qf(\bx)(x)\in D_q(F+f)(\bx,f(\bx)+\by)(x)$.
\qed\end{proof}
}\fi

Given a function $f:X\to\mathbb{R}\cup\{+\infty\}$, its \emph{$q$-order
Hadamard directional subderivative} \cite{Stu86,StuWar99} at $\bar
x\in\mathrm{dom}\, f$ is defined for all $x\in X$ by (cf.
\cite[Definition~1.1]{Pen78} and \cite[Definition~8.1]{RocWet98} for the case $q=1$)
\begin{equation}
\label{2.5}f^{\prime}_{q}(\bar x;x):=\liminf_{u\to x,\,t\downarrow0}
\frac{f(\bar x+tu)-f(\bar x)}{t^{q}}.
\end{equation}
If $f$ is Lipschitz continuous near $\bar x$ and $0<q\le1$, the above definition takes a
simpler form:
\[
f^{\prime}_{q}(\bar x;x)=\liminf_{t\downarrow0} \frac{f(\bar x+tx)-f(\bar
x)}{t^{q}}.
\]

Observe that the function $f^{\prime}_{q}(\bar x;\cdot):X\to\mathbb{R}%
\cup\{\pm\infty\}$ is lower semicontinuous\ and \emph{$q$-order positively
homogeneous} in the sense that $f^{\prime}_{q}(\bar x;\lambda x)=\lambda^{q}
f^{\prime}_{q}(\bar x;x)$ for all $x\in X$ and $\lambda>0$. We are going to
use for characterizing this function the following norm-li\-ke quantity:
\begin{gather}
\label{2.7}\|f^{\prime}_{q}(\bar x;\cdot)\|_{q}:=\inf\limits_{x\in
X\setminus\{0\}} \frac{(f^{\prime}_{q}(\bar x;x))_{+}}{\|x\|^{q}}%
=\inf\limits_{\|x\|=1} (f^{\prime}_{q}(\bar x;x))_{+}.
\end{gather}

The next statement is a direct consequence of the definitions. It uses the
\emph{epigraphical mapping} $x\mapsto\mathrm{epi}\, f(x):=\{\mu\in
\mathbb{R}\mid f(x)\le\mu\}$. Note that the graph of the latter mapping is the
\emph{epigraph} $\mathrm{epi}\, f$ of $f$.
We use the same notation for the epigraph and the epigraphical mapping.

\begin{proposition}\label{P2.8}
Let $f:X\to\R\cup\{+\infty\}$ and $\bar x\in\dom f$.
\begin{enumerate}
\item
Either $f'_q(\bx;0)=0$ or $f'_q(\bx;0)=-\infty$.
\item
$D_q(\epi f)(\bx,f(\bx))(x)=\{\mu\in\R\mid f'_q(\bx;x)\le\mu\}$ for all $x\in X$.
\item
$\|D_q(\epi f)(\bx,f(\bx))\|_q^+=+\infty$ and $\|D_q(\epi f)(\bx,f(\bx))\|_q^\circleddash=\|f'_q(\bx;\cdot)\|_q$.
\item
$\|f'_q(\bx;\cdot)\|_q>0\;\;\Longrightarrow\;\;f'_q(\bx;x)>0$ for all $x\ne0$.
If $\dim X<\infty$, the two conditions are equivalent.
\end{enumerate}
\end{proposition}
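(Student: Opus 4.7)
The plan is to treat the four assertions in sequence by unfolding definitions~\eqref{2.3}, \eqref{2.5} and \eqref{2.7} together with Proposition~\ref{P1.3}; the argument is essentially bookkeeping, the only subtlety being the handling of the boundary case $x=0$ in (iii). For (i), substituting $u=0$ in the liminf of \eqref{2.5} yields $f'_q(\bx;0)\le 0$, while the $q$-order positive homogeneity of $f'_q(\bx;\cdot)$ noted just before \eqref{2.7} gives $f'_q(\bx;0)=\la^q f'_q(\bx;0)$ for every $\la>0$; this forces the value into $\{0,+\infty,-\infty\}$, so combined with the bound $\le 0$ only $0$ and $-\infty$ remain. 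For (ii), I would simply unfold \eqref{2.3} applied to the epigraphical mapping: the condition $(\bx+t_kx_k,f(\bx)+t_k^q\mu_k)\in\epi f$ is equivalent to $(f(\bx+t_kx_k)-f(\bx))/t_k^q\le \mu_k$, and the existence of sequences $(x_k,\mu_k)\to(x,\mu)$, $t_k\downarrow 0$ meeting this inequality is exactly the statement $f'_q(\bx;x)\le \mu$.

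Writing $H:=D_q(\epi f)(\bx,f(\bx))$, part (iii) falls out of (i) and (ii). For the first identity, (i) and (ii) give $(0,1)\in\gph H$; this is a point of $\gph H\setminus\{(0,0)\}$ with numerator $1$ and denominator $\|0\|^q=0$, so the supremum in \eqref{1.1} is $+\infty$. For the second identity, (ii) identifies the points $(x,\mu)\in\gph H$ with $x\ne 0$ as those satisfying $\mu\ge f'_q(\bx;x)$; the infimum of $|\mu|$ subject to this constraint equals $(f'_q(\bx;x))_+$, so taking the infimum over $x\ne 0$ and comparing with \eqref{2.7} produces $\|H\|^\circleddash_q=\|f'_q(\bx;\cdot)\|_q$. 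Points with $x=0$ and $\mu\ne 0$ yield ratio $+\infty$ and do not affect the infimum.

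For (iv), the forward implication is immediate from \eqref{2.7}: if $f'_q(\bx;x_0)\le 0$ for some $x_0\ne 0$, the corresponding numerator vanishes. For the converse in finite dimensions I would invoke Proposition~\ref{P1.3}(viii) applied to $H$: the closedness of $\gph H$ noted after \eqref{2.3} makes $H^{-1}$ outer semicontinuous, and the hypothesis $f'_q(\bx;x)>0$ for all $x\ne 0$ combined with (ii) forces $H^{-1}(0)=\{0\}$. The proposition then yields $\|H\|^\circleddash_q>0$, equivalently $\|f'_q(\bx;\cdot)\|_q>0$ by (iii). A direct compactness argument exploiting lower semicontinuity of $(f'_q(\bx;\cdot))_+$ on the unit sphere (compact in finite dimensions) would work equally well, but the Proposition~\ref{P1.3}(viii) route is more in the spirit of the paper. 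The entire proof has no real obstacle; the only point demanding care is remembering that the convention $\|y\|/\|x\|^q = +\infty$ when $x=0,\,y\ne 0$ is what drives the first equality in (iii).
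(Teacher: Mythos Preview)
Your proposal is correct and matches the paper's proof in (ii) and (iii) essentially verbatim. In (i) you invoke the $q$-order positive homogeneity of $f'_q(\bx;\cdot)$ (stated just before \eqref{2.7}) to conclude $f'_q(\bx;0)=\la^q f'_q(\bx;0)$, whereas the paper carries out the rescaling $u_k':=\theta^{1/q}u_k$, $t_k':=\theta^{-1/q}t_k$ explicitly; these are the same idea, yours just cites the observation rather than re-deriving it. In (iv) there is a genuine, if small, difference: the paper argues directly by extracting a convergent subsequence on the unit sphere and using lower semicontinuity of $f'_q(\bx;\cdot)$, while you route through Proposition~\ref{P1.3}(viii) applied to $H=D_q(\epi f)(\bx,f(\bx))$. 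Your route is cleaner in that it reuses established machinery (and the proof of Proposition~\ref{P1.3}(viii) is itself the same compactness argument), so nothing is lost; you correctly note the direct argument as an alternative.
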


\begin{proof}
\begin{enumerate}
\item
By definition \eqref{2.5}, $f'_q(\bx;0)\le0$.
Suppose that $f'_q(\bx;0)<0$.
Then there exist sequences $u_k\to0$ and $t_k\downarrow0$ such that
\begin{equation*}
\lim_{k\to+\infty} \frac{f(\bx+t_ku_k)-f(\bx)}{t_k^q}=\al<0.
\end{equation*}
For any $\theta>0$ and $k\in\N$, set $u_k':=\theta^{\frac1q}u_k$ and $t_k':=\theta^{-\frac1q}t_k$.
We have $u_k'\to0$ and $t_k'\downarrow0$ and
\begin{equation*}
\lim_{k\to+\infty} \frac{f(\bx+t_k'u_k')-f(\bx)}{(t_k')^q}=\theta\al.
\end{equation*}
Hence, $f'_q(\bx;0)=-\infty$.
\item
The assertion
is immediate from comparing definitions \eqref{2.3} and \eqref{2.5}.
\item
By (i) and (ii), $(0,\mu)\in\gph D_q(\epi f)(\bx,f(\bx))$ for all $\mu\ge0$, and it follows from
\eqref{1.1} that $\|D_q(\epi f)(\bx,f(\bx))\|_q^+=+\infty$.
The second equality holds trivially when $f'_q(\bx;x)=+\infty$ for all $x\ne0$.
If $x\ne0$ and $f'_q(\bx;x)<+\infty$,
then, in view of (ii),
\begin{equation*}
\inf_{\mu\in D_q(\epi f)(\bx,f(\bx))(x)} |\mu|
=\inf_{\mu\ge f'_q(\bx;x)} |\mu|
=(f'_q(\bx;x))_+,
\end{equation*}
and the second equality follows from \eqref{1.1} and \eqref{2.7}.
\item
If $f'_q(\bx;x)\le0$ for some $x\ne0$, then $\|f'_q(\bx;\cdot)\|_q=0$ by definition
\eqref{2.7}.
This proves the implication.
Let $\dim X<\infty$ and $\|f'_q(\bx;\cdot)\|_q=0$.
By \eqref{2.7}, there is a sequence $\{x_k\}$ such that $\|x_k\|=1$ for all $k\in\N$, and $(f'_q(\bx;x_k))_+\to0$ as $k\to\infty$.
Without loss of generality, $x_k\to x$ as $k\to\infty$, $\|x\|=1$ and $(f'_q(\bx;x))_+=0$ since $f'_q(\bx;\cdot)$ is \lsc.
This proves the opposite implication.
\qed\end{enumerate}
\end{proof}

The next corollary is a consequence of Propositions~\ref{P4.1} and \ref{P2.8}.

\begin{corollary} \label{P2.9}
Let $f:X\to\R\cup\{+\infty\}$, $g:X\to\R$ and $\bx\in\dom f$.
If $g$ is $q$-order Hadamard directionally
differentiable at $\bx$, then
\begin{equation*}
(f+g)'_q(\bx;x) =f'_q(\bx;x)+D_qg(\bx)(x)
\quad\text{for all}\quad x\in X.
\end{equation*}
\end{corollary}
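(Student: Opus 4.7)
The plan is to derive the corollary by combining the graphical-derivative sum rule of Proposition~\ref{P4.1} with the epigraphical translation formula of Proposition~\ref{P2.8}(ii). The bridging observation is that the epigraphical mappings of $f+g$ and of $f$ are related by the identity $\epi(f+g)=\epi f+g$, where on the right the set-valued map $\epi f:X\rightrightarrows\R$ is added to the single-valued function $g$; this is checked directly from the values $(\epi f+g)(x)=\{\mu+g(x)\mid f(x)\le\mu\}=\{\nu\in\R\mid(f+g)(x)\le\nu\}$.

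Granted this identity, I would apply Proposition~\ref{P4.1} with $F:=\epi f$, $\bar y:=f(\bar x)$, and with $g$ playing the role of the single-valued summand (legitimate since $g$ is $q$-order Hadamard directionally differentiable at $\bar x$) to obtain
\begin{equation*}
D_q(\epi(f+g))(\bar x,(f+g)(\bar x))=D_q(\epi f)(\bar x,f(\bar x))+D_qg(\bar x).
\end{equation*}
Evaluating both sides at an arbitrary $x\in X$ and invoking Proposition~\ref{P2.8}(ii) would rewrite this as an equality of subsets of $\R$:
\begin{equation*}
\{\mu\in\R\mid (f+g)'_q(\bar x;x)\le\mu\}=\{\mu\in\R\mid f'_q(\bar x;x)\le\mu\}+D_qg(\bar x)(x).
\end{equation*}
Since Hadamard directional differentiability of $g$ makes $D_qg(\bar x)(x)$ a single real number, the right-hand side is exactly the half-line $\{\nu\in\R\mid f'_q(\bar x;x)+D_qg(\bar x)(x)\le\nu\}$, and matching endpoints yields the claimed formula.

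There is no genuine obstacle here; the only places that deserve a brief sanity check are the boundary cases. If $f'_q(\bar x;x)=+\infty$, the set on the right is empty and so is its translate, matching $(f+g)'_q(\bar x;x)=+\infty$. For $x=0$, Proposition~\ref{P2.8}(i) restricts each subderivative to $\{0,-\infty\}$; since $D_qg(\bar x)(0)=0$ by $q$-order positive homogeneity, the identity reduces to a one-line check from definition~\eqref{2.5}, using that adding a convergent difference quotient to a divergent one preserves divergence. The conceptual core of the argument is simply the recognition that the epigraphical identity $\epi(f+g)=\epi f+g$ converts the claim into a direct instance of the set-valued sum rule of Proposition~\ref{P4.1}.
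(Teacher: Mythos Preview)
Your proof is correct and follows exactly the route indicated by the paper, which simply states that the corollary is a consequence of Propositions~\ref{P4.1} and~\ref{P2.8}; the bridging identity $\epi(f+g)=\epi f+g$ you spell out is precisely what is needed to combine them. The boundary checks you include are sound but not strictly necessary, as the half-line comparison via Proposition~\ref{P2.8}(ii) already handles the $\pm\infty$ cases uniformly.
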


\section{H\"older strong subregularity, isolated calmness and sharp minimum}

\label{S3}

In this section, H\"older graphical derivatives are used for characterizing
H\"older strong subregularity, isolated calmness and sharp minimum.

\begin{definition}\label{D3.1}
\begin{enumerate}
\item
A mapping $F:X\rightrightarrows Y$ is $q$-order strongly
subregular at $(\bar{x},\bar{y})\in\gph F$ with modulus $\tau>0$ if there exist neighbourhoods $U$ of $\bar x$ and $V$ of $\bar y$ such that
\begin{equation}\label{D3.1-1}
\tau\|x-\bar x\|^{q}\leq d(\by,F(x)\cap V)\qdtx{for all} x\in U.
\end{equation}
The exact upper bound of all such $\tau>0$ is denoted by $\srgq F(\bar{x},\bar{y})$.
\item
A mapping $S:Y\rightrightarrows X$ possesses $q$-order isolated calmness property at $(\bar{y},\bar{x})\in\gph S$ with modulus $\tau>0$ if there exist neighbourhoods $U$ of $\bar x$ and $V$ of $\bar y$ such that
\begin{equation}\label{D3.1-2}
\tau\|x-\bar x\|^{q}\leq\|y-\bar y\|\qdtx{for all} y\in V\;\mbox{and}\; x\in S(y)\cap U.
\end{equation}
The exact upper bound of all such $\tau>0$ is denoted by $\clmq S(\bar{y},\bar{x})$.
\end{enumerate}
\end{definition}

If $F$ is not $q$-order strongly
subregular at $(\bar{x},\bar{y})$
or $S$ does not possess $q$-or\-der isolated calmness property at $(\bar
{y},\bar{x})$, we have $\mathrm{srg_{q}}\, F(\bar{x},\bar{y})=0$ or
$\mathrm{clm_{q}}\, S(\bar{y},\bar{x})=0$, respectively.

The properties in the above definition are well known in the linear case $q=1$ (see, e.g., \cite{DonRoc14}), but have also been studied in the general setting (also for not necessarily strong subregularity and not necessarily isolated calmness); cf. \cite{Kum09,GayGeoJea11,ChuKim16}.
Because of the distance involved in the \RHS\ of \eqref{D3.1-1} (and also in its \LHS\ in the case of the not strong version), the property in part (i) of Definition~\ref{D3.1} is often referred to as  $q$-order strong \emph{metric} subregularity.

\begin{remark}\label{R3.2}
\begin{enumerate}
\item
In both parts of Definition~\ref{D3.1}, it suffices to take $V:=Y$; cf. \cite[Exercise~3H.4]{DonRoc14}.
\item
Condition \eqref{D3.1-2} implies that $S(\by)\cap U=\{\bx\}$, i.e. $\bx$ is an isolated point in $S(\by)$, which justifies the word `isolated' in the name of the property in Definition~\ref{D3.1}(ii).
\item
The moduli $\srgq F(\bar{x},\bar{y})$ and $\clmq S(\bar{y},\bar{x})$ are usually introduced to characterize the usual (not strong!)
subregularity and (not isolated!) calmness.
We do not consider these two weaker properties in the current paper.
If a respective (strong or isolated) property in Definition~\ref{D3.1} holds, then the corresponding modulus coincides with the conventional one.
\item
When $V=Y$, condition \eqref{D3.1-1} is obviously implied by the following $q$-order strong \emph{graph} subregularity property:
\begin{equation*}
\tau\|x-\bar x\|^{q}\leq d((x,\by),\gph F)\qdtx{for all} x\in U
\end{equation*}
(with the same $\tau$ and $U$).
It is not difficult to show that, when $q\ge1$, $q$-order (strong) subregularity in part (i) of Definition~\ref{D3.1} implies $q$-order (strong) {graph} subregularity
(with smaller $\tau$ and $U$);
cf. a characterization of subregularity in \cite[Proposition 2.61]{Iof17}.
A similar observation can be made about the calmness property in part (ii) of Definition~\ref{D3.1}; cf. the well-known characterization of $q$-order calmness by Kummer \cite[Lemma 2.2]{Kum09}, and the earlier result by Klatte and Kummer \cite[Lemma~3.2]{KlaKum02.2} for the case $q=1$.
\item
There is some inconsistency in the literature concerning whether to place the constants $\tau$ and/or $q$, which determine the properties in Definition~\ref{D3.1}, in the left or \RHS s of the inequalities \eqref{D3.1-1} and \eqref{D3.1-2} (and similar inequalities involved in related definitions); cf., e.g., \cite{KlaKum02}.
This applies also to our own recent paper \cite{KruLopYanZhu19}, where we placed $q$ in the \RHS s of the inequalities.
Of course, the position of the constants does not effect the properties, but it has an effect on the values of the respective moduli.
Our choice in the current paper is determined by our desire to produce the simplest relations between these moduli and the quantitative characteristics of H\"older graphical derivatives and more straightforward proofs.
\end{enumerate}
\end{remark}

The next proposition is an immediate consequence of Definition~\ref{D3.1}.

\begin{proposition}\label{P3.2-}
Let $F:X\rightrightarrows Y$ and $(\bx,\by)\in\gph F$.
Then $F$ is $q$-order strongly
subregular at $(\bar{x},\bar{y})$ if and only if $F\iv$ possesses $q$-or\-der isolated calmness property at $(\bar{y},\bar{x})$, and $\srgq F(\bar{x},\bar{y})=\clmq F\iv(\bar{y},\bar{x})$.
\end{proposition}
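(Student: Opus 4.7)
The plan is to simply unfold the two definitions side by side and observe that they coincide symbol-for-symbol once one rewrites the distance $d(\by,F(x)\cap V)$ as an infimum and uses the equivalence $x\in F\iv(y)\Leftrightarrow y\in F(x)$. Since the equivalence will be shown to preserve the neighbourhoods $U$, $V$ and the constant $\tau$, the equality of moduli $\srgq F(\bx,\by)=\clmq F\iv(\by,\bx)$ will follow immediately by taking suprema over admissible $\tau$.

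More concretely, first I would assume that $F$ is $q$-order strongly subregular at $(\bx,\by)$ with modulus $\tau$ and associated neighbourhoods $U,V$. Given any $y\in V$ and any $x\in F\iv(y)\cap U$, one has $y\in F(x)\cap V$, so
\[
\tau\|x-\bx\|^q\le d(\by,F(x)\cap V)\le\|y-\by\|,
\]
which is exactly the defining inequality \eqref{D3.1-2} for $F\iv$ at $(\by,\bx)$ with the same $\tau$, $U$ and $V$.

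Conversely, assume $F\iv$ has $q$-order isolated calmness at $(\by,\bx)$ with modulus $\tau$ and neighbourhoods $U,V$. For any $x\in U$, if $F(x)\cap V=\emptyset$ then $d(\by,F(x)\cap V)=+\infty$ by the convention $\inf\emptyset=+\infty$, and \eqref{D3.1-1} holds trivially. Otherwise, every $y\in F(x)\cap V$ satisfies $y\in V$ and $x\in F\iv(y)\cap U$, whence $\tau\|x-\bx\|^q\le\|y-\by\|$; taking the infimum over such $y$ yields \eqref{D3.1-1} with the same $\tau$.

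Since the set of admissible $\tau>0$ in Definition~\ref{D3.1}(i) for $F$ at $(\bx,\by)$ coincides with the set of admissible $\tau>0$ in Definition~\ref{D3.1}(ii) for $F\iv$ at $(\by,\bx)$, their suprema agree, i.e.\ $\srgq F(\bx,\by)=\clmq F\iv(\by,\bx)$. There is no genuine obstacle here; the only point requiring a little care is handling the case $F(x)\cap V=\emptyset$ via the convention $\inf\emptyset=+\infty$ fixed in the Introduction, which makes the ``$\Leftarrow$'' direction seamless.
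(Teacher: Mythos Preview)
Your argument is correct and is precisely the unfolding of Definition~\ref{D3.1} that the paper has in mind: the paper states only that the proposition ``is an immediate consequence of Definition~\ref{D3.1}'' and gives no further proof, so you have simply supplied the (straightforward) details. The only minor remark is that your care with the case $F(x)\cap V=\emptyset$ via the convention $\inf\emptyset=+\infty$ is exactly right and matches the paper's conventions.
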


The next proposition and its corollaries generalize \cite[Theorem~4E.1 and
Corollary~4E.2]{DonRoc14}.

\if{
\AK{15/02/20.
\cite{DonRoc14} is the 2nd edition.
The reference above is correct.
In the first edition, it was Corollary~4C.2.}
}\fi

\begin{proposition}\label{P3.2}
Let $F:X\rightrightarrows Y$ and $(\bx,\by)\in\gph F$.
Then
\begin{equation}\label{P3.2-1}
\srgq F(\bar{x},\bar{y})\le \|D_{q}F(\bar x,\bar y)\|^\circleddash_{q}.
\end{equation}
If $\dim X<+\infty$ and $\dim Y<+\infty$, then \eqref{P3.2-1} holds as equality.
\end{proposition}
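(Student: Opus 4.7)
The plan is to prove the inequality \eqref{P3.2-1} directly from the definitions in any normed spaces, and then establish the reverse inequality in finite dimensions by a compactness/contradiction argument. Throughout I would use the reformulation $\|D_qF(\bx,\by)\|^\circleddash_q=\inf\{\|y\|/\|x\|^q\mid (x,y)\in\gph D_qF(\bx,\by),\,x\ne0\}$ supplied by \eqref{1.1} and Proposition~\ref{P1.3}(i); the degenerate case $\gph D_qF(\bx,\by)=\{(0,0)\}$ makes the right-hand side of \eqref{P3.2-1} equal to $+\infty$ by Proposition~\ref{P1.3}(iv), so the inequality is trivial there.

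For the first (general) inequality, I would fix an arbitrary $\tau\in(0,\srgq F(\bx,\by))$ and pick any $(x,y)\in\gph D_qF(\bx,\by)$ with $x\ne0$. Using Remark~\ref{R3.2}(i) we may take $V=Y$ in \eqref{D3.1-1}. The definition \eqref{2.3} supplies sequences $(x_k,y_k)\to(x,y)$ and $t_k\downarrow0$ with $(\bx+t_kx_k,\by+t_k^qy_k)\in\gph F$. For $k$ sufficiently large, $\bx+t_kx_k$ lies in the subregularity neighborhood $U$, so plugging the feasible point $\by+t_k^qy_k\in F(\bx+t_kx_k)$ into \eqref{D3.1-1} yields $\tau\|t_kx_k\|^q\le t_k^q\|y_k\|$, i.e.\ $\tau\|x_k\|^q\le\|y_k\|$. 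Passing to the limit gives $\tau\|x\|^q\le\|y\|$, whence $\|y\|/\|x\|^q\ge\tau$. Taking the infimum over admissible $(x,y)$ and then the supremum over $\tau<\srgq F(\bx,\by)$ produces \eqref{P3.2-1}.

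For the reverse inequality under $\dim X,\dim Y<\infty$, I would argue by contradiction. Assume $\srgq F(\bx,\by)<\tau<\|D_qF(\bx,\by)\|^\circleddash_q$; again taking $V=Y$, the failure of \eqref{D3.1-1} with modulus $\tau$ on every neighborhood of $\bx$ produces a sequence $x_k\to\bx$, $x_k\ne\bx$, with $d(\by,F(x_k))<\tau\|x_k-\bx\|^q$. Choose $y_k\in F(x_k)$ with $\|y_k-\by\|<\tau\|x_k-\bx\|^q$, set $t_k:=\|x_k-\bx\|\downarrow0$, $u_k:=(x_k-\bx)/t_k$, $v_k:=(y_k-\by)/t_k^q$, so that $\|u_k\|=1$ and $\|v_k\|<\tau$. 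Finite-dimensionality of $X$ and $Y$ lets me pass to subsequences with $u_k\to u$, $\|u\|=1$, and $v_k\to v$ with $\|v\|\le\tau$. Since $(\bx+t_ku_k,\by+t_k^qv_k)\in\gph F$ for all $k$, the definition \eqref{2.3} gives $v\in D_qF(\bx,\by)(u)$, and then $\|v\|/\|u\|^q\le\tau<\|D_qF(\bx,\by)\|^\circleddash_q$ contradicts the infimum characterization of the latter.

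The main obstacle is the finite-dimensional half: one must simultaneously rescale both the domain and range increments by matching orders ($t_k$ versus $t_k^q$) and then verify that the limit pair $(u,v)$ genuinely belongs to the graph of $D_qF(\bx,\by)$ with $u\ne0$. The rescaling $\|u_k\|=1$ is the crucial device ensuring compactness of the $u_k$'s and the nonvanishing of the limit, while boundedness of $\|v_k\|$ by $\tau$ provides compactness on the $Y$ side; everything else reduces to routine manipulation of the defining inequality \eqref{D3.1-1} and the limit definition \eqref{2.3}.
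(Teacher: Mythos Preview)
Your proposal is correct and follows essentially the same approach as the paper's own proof: both establish \eqref{P3.2-1} by testing an arbitrary $\tau<\srgq F(\bx,\by)$ against points of $\gph D_qF(\bx,\by)$ via \eqref{2.3}, and both obtain the reverse inequality in finite dimensions by extracting from the failure of \eqref{D3.1-1} a sequence $(x_k,y_k)\in\gph F$, rescaling with $t_k:=\|x_k-\bx\|$, $u_k:=(x_k-\bx)/t_k$, $v_k:=(y_k-\by)/t_k^q$, and passing to convergent subsequences. The only difference is cosmetic---you phrase the second half as a contradiction with a $\tau$ strictly between the two quantities, whereas the paper takes an arbitrary $\tau>\srgq F(\bx,\by)$ and directly bounds $\|D_qF(\bx,\by)\|^\circleddash_q$ from above.
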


\begin{proof}
If $\srgq F(\bar{x},\bar{y})=0$ or
$\|D_{q}F(\bar x,\bar y)\|^\circleddash_{q}=+\infty$, inequality \eqref{P3.2-1} holds trivially.
Let $\tau\in(0,\srgq F(\bar{x},\bar{y}))$.
Let $u\in X\setminus\{0\}$ and $v\in D_{q}F(\bar x, \bar y)(u)$, i.e. there exist sequences $(u_k,v_k)\to(u,v)$ and $t_k\downarrow0$ such that $\by+t_k^qv_k\in F(\bx+t_ku_k)$ for all $k\in\mathbb{N}$.
By Definition~\ref{D3.1}(i),
$\tau\|u_k\|^{q}\leq\|v_k\|$
for all sufficiently large $k\in\mathbb{N}$, and consequently, $\tau\leq\|v\|/\|u\|^{q}$.
In view of definition \eqref{1.1}, we have
$\tau\leq\|D_{q}F(\bar x,\bar y)\|^\circleddash_{q}$.
Inequality \eqref{P3.2-1} follows.
Let $\dim X<+\infty$, $\dim Y<+\infty$, and $\tau>\srgq F(\bar{x},\bar{y})$.
By Definition~\ref{D3.1}(i),
there exists a sequence $(x_k,y_k)\to(\bx,\by)$
such that $(x_k,y_k)\in\gph F$ and $\tau\|x_k-\bar x\|^{q}>\|y_k-\bar y\|$ for all $k\in\mathbb{N}$.
Then $t_k:=\|x_k-\bx\|\downarrow0$.
Set $u_k:=(x_k-\bx)/t_k$ and $v_k:=(y_k-\by)/t_k^{q}$ ($k\in\N$).
Without loss of generality, $u_k\to u\in X$, $\|u\|=1$, and $v_k\to v\in Y$, $\|v\|\le\tau$.
Thus, $v\in D_{q}F(\bar x,\bar y)(u)$ and
$\|D_{q}F(\bar x,\bar y)\|^\circleddash_{q}\le\|v\|/\|u\|^{q}\leq\tau.$
Hence, \eqref{P3.2-1} holds as equality.
\qed\end{proof}

The following statement provides a characterization of $q$-order strong
subregularity of a mapping in terms of its $q$-order graphical derivative.

\begin{corollary}\label{C1.7+}
Let $F:X\rightrightarrows Y$ and $(\bx,\by)\in\gph F$.
Consider the following conditions:
\begin{enumerate}
\item
$F$ is $q$-order strongly subregular at $(\bx,\by)$;
\item
$\|D_{q}F(\bx,\by)\|^\circleddash_{q}>0$;
\item
$D_{q}F(\bx,\by)\iv(0)=\{0\}$.
\end{enumerate}
Then {\rm (i) $\Rightarrow$ (ii) $\Rightarrow$ (iii)}.
If $\dim X<+\infty$ and $\dim Y<+\infty$, then {\rm (i) $\Leftrightarrow$ (ii) $\Leftrightarrow$ (iii)}.
\end{corollary}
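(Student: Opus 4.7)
The plan is to derive all three implications as direct consequences of Proposition~\ref{P3.2} together with assertion (viii) of Proposition~\ref{P1.3}, applied to the $q$-order positively homogeneous mapping $H:=D_qF(\bx,\by)$.

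First, for (i) $\Rightarrow$ (ii) I would simply invoke Proposition~\ref{P3.2}: (i) means $\srgq F(\bx,\by)>0$, and the inequality $\srgq F(\bx,\by)\le\|D_qF(\bx,\by)\|^\circleddash_q$ then yields (ii). The implication (ii) $\Rightarrow$ (iii) is immediate from the first (unconditional) half of Proposition~\ref{P1.3}(viii) applied to $H$, since $H$ is $q$-order positively homogeneous as remarked just after \eqref{2.3}.

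For the equivalences in finite dimensions, my strategy would be to close the loop via (iii) $\Rightarrow$ (ii) $\Rightarrow$ (i). The implication (iii) $\Rightarrow$ (ii) is the second half of Proposition~\ref{P1.3}(viii), whose hypotheses are that $\dim X<\infty$ and that $H^{-1}$ is outer semicontinuous at $0$. The former is given. The latter is free of charge: it is noted directly after \eqref{2.3} that $\gph D_qF(\bx,\by)$ is closed, so $\gph H^{-1}$ is closed as well, which makes $H^{-1}$ outer semicontinuous everywhere (cf.\ the remark in the Introduction). Finally, (ii) $\Rightarrow$ (i) in finite dimensions is just the equality statement of Proposition~\ref{P3.2}: under $\dim X<\infty$ and $\dim Y<\infty$, $\srgq F(\bx,\by)=\|D_qF(\bx,\by)\|^\circleddash_q$, so positivity of the right-hand side forces positivity of $\srgq F(\bx,\by)$, i.e.\ $F$ is $q$-order strongly subregular.

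Essentially no step requires a new argument: everything has been prepared in Section~\ref{S2} and in Proposition~\ref{P3.2}. The only point worth flagging is the automatic verification of the outer-semicontinuity assumption needed for the converse half of Proposition~\ref{P1.3}(viii); once one notices that the graph of the $q$-order graphical derivative is closed by construction, there is no real obstacle. No finite-dimensionality assumption is used in (i) $\Rightarrow$ (ii) $\Rightarrow$ (iii); it enters only in the two reverse implications, through Proposition~\ref{P3.2} and Proposition~\ref{P1.3}(viii) respectively.
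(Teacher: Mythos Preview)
Your proposal is correct and follows essentially the same approach as the paper: invoke Proposition~\ref{P3.2} for (i)$\Leftrightarrow$(ii) and Proposition~\ref{P1.3}(viii) for (ii)$\Leftrightarrow$(iii), noting that the closedness of $\gph D_qF(\bx,\by)$ supplies the outer semicontinuity needed for the converse in (viii). The paper's proof is organized as two equivalences rather than closing a loop, but the content is identical.
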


\begin{proof}
Thanks to Proposition~\ref{P3.2}, we have the implication (i) $\Rightarrow$ (ii) in general, and the equivalence (i) $\Leftrightarrow$ (ii) when $\dim X<+\infty$ and $\dim Y<+\infty$.
The implication (ii) $\Rightarrow$ (iii) is an
immediate consequence of Proposition~\ref{P1.3}(viii).
The graph of $D_{q}F(\bx,\by)$ is closed by definition, hence, $D_{q}F(\bx,\by)\iv$ is outer semicontinuous at $0$.
Employing Proposition~\ref{P1.3}(viii) again, we conclude that (ii) $\Leftrightarrow$ (iii) when $\dim X<+\infty$.
\qed\end{proof}

\begin{remark}
A coderivative analogue (employing a special kind of limiting coderivative) of
the equality in Corollary~\ref{C1.7+}(iii)
is used in \cite[Theorem 5.2]{ZheZhu16} to characterize nonlinear subregularity.
\end{remark}

The next example illustrates application of Corollary~\ref{C1.7+} for checking H\"older strong subregularity as well as computation of the H\"older graphical derivative and relevant norm-like quantity.

\begin{example}
Let $F:\R\rightrightarrows\R$ be the epigraphical mapping: 
$F(x):=[x^2,+\infty)$
for all $x\in\R$, and $\bx=\by=0$.
By definition~\eqref{2.3}, $y\in D_{q}F(0,0)(x)$ if and only if there exist sequences $(x_{k},y_{k})\to(x,y)$ and $t_{k}\downarrow0$ such that
$y_{k}\ge t_{k}^{2-q}x_{k}^2$ for all
$k\in\mathbb{N}$, or equivalently, $y\ge \lim_{k\to\infty}t_{k}^{2-q}x^2$.
Thus, there are three distinct possibilities.

\underline{$0<q<2$}.
$D_{q}F(0,0)(x)=\R_+$ for all $x\in\R$.
Thus, $D_{q}F(0,0)\iv(0)=\R$ and, by \eqref{1.1}, $\|D_{q}F(0,0)\|^\circleddash_{q}=0$.
Each of the conditions (ii) and (iii) in Corollary~\ref{C1.7+} yields that $F$ is not $q$-order strongly subregular at $(0,0)$.

\underline{$q=2$}.
$D_{q}F(0,0)(x)=[x^2,+\infty)$ for all $x\in\R$.
Thus, $D_{q}F(0,0)\iv(0)=\{0\}$ and, by \eqref{1.1}, $\|D_{q}F(0,0)\|^\circleddash_{q}=1$.
Each of the conditions (ii) and (iii) in Corollary~\ref{C1.7+} yields that $F$ is $q$-order strongly subregular at $(0,0)$.

\underline{$q>2$}.
$D_{q}F(0,0)(0)=\{0\}$, and $D_{q}F(0,0)(x)=\es$ for all $x\ne0$.
Thus, $D_{q}F(0,0)\iv(0)=\{0\}$ and, by \eqref{1.1}, $\|D_{q}F(0,0)\|^\circleddash_{q}=+\infty$.
Each of the conditions (ii) and (iii) in Corollary~\ref{C1.7+} yields that $F$ is $q$-order strongly subregular at $(0,0)$.
\sloppy

Of course, in this simple example, the same conclusions can be obtained directly from Definition~\ref{D3.1}(i).
\end{example}

\begin{corollary}\label{P3.1}
Let $S:Y\rightrightarrows X$ and $(\by,\bx)\in\gph S$.
Then
\begin{equation}\label{P3.1-1}
\clmq S(\bar{y},\bar{x})\le\left(\|D_\frac{1}{q}S(\bar y,\bar x)\|^+_\frac{1}{q}\right)^{-q}.
\end{equation}
If $\dim X<+\infty$ and $\dim Y<+\infty$, then \eqref{P3.1-1} holds as equality.
\end{corollary}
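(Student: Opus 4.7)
The plan is to reduce Corollary~\ref{P3.1} to the already-proved Proposition~\ref{P3.2} by passing to the inverse mapping. Set $F:=S^{-1}:X\rightrightarrows Y$, so that $S=F^{-1}$ and $(\bx,\by)\in\gph F$. By Proposition~\ref{P3.2-}, $q$-order isolated calmness of $S$ at $(\by,\bx)$ is equivalent to $q$-order strong subregularity of $F$ at $(\bx,\by)$, and the moduli agree:
\[
\clmq S(\by,\bx)=\srgq F(\bx,\by).
\]

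Next, I would invoke Proposition~\ref{P3.2} for the mapping $F$, which gives
\[
\srgq F(\bx,\by)\le\|D_qF(\bx,\by)\|^\circleddash_q,
\]
with equality when $\dim X<+\infty$ and $\dim Y<+\infty$. The final step is to rewrite the right-hand side in terms of the graphical derivative of $S$. By the inversion formula \eqref{4.003},
\[
D_qF(\bx,\by)^{-1}=D_{\frac{1}{q}}F^{-1}(\by,\bx)=D_{\frac{1}{q}}S(\by,\bx).
\]
Setting $H:=D_qF(\bx,\by)$, so that $H^{-1}=D_{\frac{1}{q}}S(\by,\bx)$, and applying Proposition~\ref{P1.3}(ii) to $H$ yields
\[
\|H\|^\circleddash_q=\bigl(\|H^{-1}\|^+_{\frac{1}{q}}\bigr)^{-q} =\bigl(\|D_{\frac{1}{q}}S(\by,\bx)\|^+_{\frac{1}{q}}\bigr)^{-q}.
\]
Chaining the three equalities/inequalities produces \eqref{P3.1-1} in full generality, and the equality case follows because each of the three steps is an equality when both spaces are finite-dimensional.

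Since every ingredient has been established earlier in the paper, there is no real obstacle here; the only thing to watch carefully is the bookkeeping of inverses and of the exponents $q$ versus $1/q$ (in particular the $(-q)$-th power coming from Proposition~\ref{P1.3}(ii), which is exactly what converts the outer-type norm of $D_{1/q}S(\by,\bx)$ into a calmness-type modulus of order $q$).
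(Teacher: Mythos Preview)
Your proof is correct and follows essentially the same route as the paper: the paper also derives \eqref{P3.1-1} by combining Proposition~\ref{P1.3}(ii) and the inversion formula \eqref{4.003} to rewrite $\|D_qF(\bx,\by)\|^\circleddash_q$ as $\bigl(\|D_{\frac{1}{q}}S(\by,\bx)\|^+_{\frac{1}{q}}\bigr)^{-q}$, and then invokes Propositions~\ref{P3.2-} and \ref{P3.2}. The only difference is the order in which the ingredients are assembled.
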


\begin{proof}
By Proposition~\ref{P1.3}(ii) and \eqref{4.003}, we have
\begin{equation*}
\|D_{q}F(\bar x,\bar y)\|^\circleddash_q
=\left(\|D_{q}F(\bar x,\bar y)\iv\|^+_\frac{1}{q}\right)^{-q}
=\left(\|D_\frac{1}{q}F\iv(\bar y,\bar x)\|^+_\frac{1}{q}\right)^{-q}.
\end{equation*}
The assertion follows from Propositions~\ref{P3.2-} and \ref{P3.2}.
\qed\end{proof}

\begin{corollary}\label{T1.6}
Let $S:Y\rightrightarrows X$ and $(\by,\bx)\in\gph S$.
Consider the following conditions:
\begin{enumerate}
\item
$S$ possesses $q$-order isolated calmness property at $(\bar y, \bar x)$;
\item
$\|D_{\frac{1}{q}} S(\bar y,\bar x)\|^+_\frac{1}{q} <+\infty$;
\item
$D_{\frac{1}{q}}S(\bar y, \bar x)(0)=\{0\}$.
\end{enumerate}
Then {\rm (i) $\Rightarrow$ (ii) $\Rightarrow$ (iii)}.
If $\dim X<+\infty$ and $\dim Y<+\infty$, then {\rm (i) $\Leftrightarrow$ (ii) $\Leftrightarrow$ (iii)}.
\end{corollary}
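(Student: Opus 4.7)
The plan is to derive this corollary as a direct consequence of Corollary~\ref{P3.1} (which links the modulus $\clmq S(\bar y,\bar x)$ to $\|D_\frac{1}{q}S(\bar y,\bar x)\|^+_\frac{1}{q}$) and Proposition~\ref{P1.3}(vii) (which links finiteness of the outer norm to the value of the mapping at zero), applied to the $\frac{1}{q}$-order positively homogeneous mapping $H:=D_\frac{1}{q}S(\bar y,\bar x):Y\rightrightarrows X$.

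For the implication (i) $\Rightarrow$ (ii), I would observe that if $S$ possesses $q$-order isolated calmness at $(\bar y,\bar x)$ then $\clmq S(\bar y,\bar x)>0$. Applying the inequality in Corollary~\ref{P3.1} yields $\left(\|D_\frac{1}{q}S(\bar y,\bar x)\|^+_\frac{1}{q}\right)^{-q}\ge \clmq S(\bar y,\bar x)>0$, which gives (ii) immediately. For (ii) $\Rightarrow$ (iii), I would apply Proposition~\ref{P1.3}(vii) directly to $H=D_\frac{1}{q}S(\bar y,\bar x)$ with order $\frac{1}{q}$ in place of $q$: the first half of that assertion states exactly that finiteness of the outer norm forces $H(0)=\{0\}$.

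For the finite-dimensional equivalences, (ii) $\Leftrightarrow$ (i) follows from the equality version of Corollary~\ref{P3.1} valid when $\dim X<+\infty$ and $\dim Y<+\infty$: the modulus $\clmq S(\bar y,\bar x)$ is positive precisely when $\|D_\frac{1}{q}S(\bar y,\bar x)\|^+_\frac{1}{q}<+\infty$. For (iii) $\Rightarrow$ (ii) I would invoke the second half of Proposition~\ref{P1.3}(vii); this requires the codomain of $H$ (which is $X$) to be finite-dimensional and $H$ to be outer semicontinuous at $0$. The former is covered by the assumption $\dim X<+\infty$, while the latter is automatic since $\gph D_\frac{1}{q}S(\bar y,\bar x)$ is closed by construction of the graphical derivative \eqref{2.3}.

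There is no real obstacle; the only point requiring care is matching the parameters correctly when invoking Proposition~\ref{P1.3}(vii), namely that the relevant order is $\frac{1}{q}$ and the relevant codomain is $X$ (so the dimensional hypothesis is $\dim X<+\infty$, not $\dim Y<+\infty$), and that outer semicontinuity of $H$ at $0$ comes for free from the closedness of the graph of any $q$-order graphical derivative noted just after \eqref{2.3}.
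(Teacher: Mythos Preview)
Your proof is correct and follows essentially the same approach as the paper's: both derive (i)~$\Leftrightarrow$~(ii) from Corollary~\ref{P3.1} and (ii)~$\Leftrightarrow$~(iii) from Proposition~\ref{P1.3}(vii), using the closedness of $\gph D_\frac{1}{q}S(\bar y,\bar x)$ to secure outer semicontinuity at~$0$. Your care in noting that the relevant codomain in Proposition~\ref{P1.3}(vii) is $X$ (so that only $\dim X<+\infty$ is needed for (iii)~$\Rightarrow$~(ii)) is accurate and in fact slightly sharper than the paper's own phrasing.
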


\begin{proof}
Thanks to Corollary~\ref{P3.1}, we have the implication (i) $\Rightarrow$ (ii) in general, and the equivalence (i) $\Leftrightarrow$ (ii) when $\dim X<+\infty$ and $\dim Y<+\infty$.
The implication (ii) $\Rightarrow$ (iii) is an
immediate consequence of Proposition~\ref{P1.3}(vii).
The graph of $D_{\frac{1}{q}}S(\bar y, \bar x)$ is closed by definition, hence, $D_{\frac{1}{q}}S(\bar y, \bar x)$ is outer semicontinuous at $0$.
Employing Proposition~\ref{P1.3}(vii) again, we conclude that (ii) $\Leftrightarrow$ (iii) when $\dim Y<+\infty$.
\qed\end{proof}

\if{
\AK{7/08/19.
I have waived the assumptions that $q\le1$ and $\gph S$ is (locally) closed:
I don't see where they are needed.
The closedness assumption is present in \cite{DonRoc14}.
Is it needed there?}
}\fi

The following theorem shows that $q$-order strong subregularity enjoys
stability under perturbations by functions with small ${q}$-order Hadamard
directional derivatives.

\begin{theorem}
Let $\dim X<+\infty$, $\dim Y<+\infty$,  $F:X\rightrightarrows Y$, $g:X\to Y$, $(\bx,\by)\in\gph F$, $\bx\in\dom g$, and
$g$ be ${q}$-order Hadamard directionally differentiable at $\bx$.
Then
\begin{equation*}
\srgq(F+g)(\bx,\by+g(\bx))\geq \srgq F(\bar{x},\bar{y}) -\|D_{q}g(\bx)\|^+_{q}.
\end{equation*}
If $\|D_{q}F(\bx,\by)\|^\circleddash_{q} >\|D_{q}g(\bx)\|^+_{q}$, then $F+g$ is $q$-order strongly subregular at $(\bx,\by+g(\bx))$.
\end{theorem}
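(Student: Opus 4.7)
The plan is to assemble the theorem as a straightforward chain from three earlier ingredients: the finite-dimensional equality $\srgq F(\bx,\by)=\|D_qF(\bx,\by)\|^\circleddash_q$ from Proposition~\ref{P3.2}, the sum rule $D_q(F+g)(\bx,\by+g(\bx))=D_qF(\bx,\by)+D_qg(\bx)$ from Proposition~\ref{P4.1}, and the perturbation inequality \eqref{T1.4-1} from Theorem~\ref{T1.4}.

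First I would observe that, since $g$ is $q$-order Hadamard directionally differentiable at $\bx$, the graphical derivative $D_qg(\bx):X\to Y$ is a single-valued, $q$-order positively homogeneous function (this is stated in the paragraph introducing Hadamard differentiability, and $q$-order positive homogeneity of graphical derivatives is recorded after \eqref{2.3}). In particular, $D_qg(\bx)$ is everywhere defined and qualifies as the function $f$ appearing in Theorem~\ref{T1.4}. The mapping $D_qF(\bx,\by)$ plays the role of $H$ there.

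Next, applying Proposition~\ref{P4.1} gives the identity
\begin{equation*}
D_q(F+g)(\bx,\by+g(\bx))=D_qF(\bx,\by)+D_qg(\bx),
\end{equation*}
and then Theorem~\ref{T1.4} yields
\begin{equation*}
\|D_q(F+g)(\bx,\by+g(\bx))\|^\circleddash_q \ge \|D_qF(\bx,\by)\|^\circleddash_q-\|D_qg(\bx)\|^+_q.
\end{equation*}
Because $\dim X<+\infty$ and $\dim Y<+\infty$, Proposition~\ref{P3.2} applied to $F+g$ at $(\bx,\by+g(\bx))$ and to $F$ at $(\bx,\by)$ converts both inner-norm quantities into the corresponding moduli, giving
\begin{equation*}
\srgq(F+g)(\bx,\by+g(\bx))\ge \srgq F(\bx,\by)-\|D_qg(\bx)\|^+_q,
\end{equation*}
which is the required inequality.

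For the second assertion, if $\|D_qF(\bx,\by)\|^\circleddash_q>\|D_qg(\bx)\|^+_q$, then the displayed chain shows $\|D_q(F+g)(\bx,\by+g(\bx))\|^\circleddash_q>0$, whence by Corollary~\ref{C1.7+}(ii)$\Rightarrow$(i) in the finite-dimensional setting, $F+g$ is $q$-order strongly subregular at $(\bx,\by+g(\bx))$. There is no real obstacle here beyond checking that the hypotheses of Theorem~\ref{T1.4} are met, which reduces to noting that Hadamard directional differentiability of $g$ provides the needed positively homogeneous single-valued derivative.
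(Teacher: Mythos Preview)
Your proof is correct and follows essentially the same route as the paper's own argument: apply Proposition~\ref{P4.1} for the sum rule, then Theorem~\ref{T1.4} for the perturbation estimate on $\|\cdot\|^\circleddash_q$, then Proposition~\ref{P3.2} (finite-dimensional equality) and Corollary~\ref{C1.7+} to translate back to subregularity moduli. The only difference is that you spell out the verification that $D_qg(\bx)$ is single-valued and $q$-order positively homogeneous so that Theorem~\ref{T1.4} applies, whereas the paper leaves this implicit.
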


\begin{proof}
By Proposition \ref{P4.1} and Theorem \ref{T1.4},
\begin{align*}\notag
\|D_{q}(F+g)(\bx,\by+g(\bx))\|^\circleddash_{q}
&=\|D_{q}F(\bx,\by) +D_{q}g(\bx)\|^\circleddash_{q}
\\\notag
&\geq \|D_{q}F(\bx,\by)\|^\circleddash_{q} -\|D_{q}g(\bx)\|^+_{q}.
\end{align*}
The assertion follows from Proposition~\ref{P3.2} and Corollary~\ref{C1.7+}.
\qed\end{proof}

The next proposition is a consequence of Proposition~\ref{P3.12}(ii) and
Proposition~\ref{P3.2}.

\begin{proposition}\label{P3.6}
Let $\dim X<+\infty$, $F:X\rightrightarrows X^*$ and $(\bx,x^*)\in\gph F$.
Then $\|D_{q}F(\bx,x^*)\|^*_{q}\le\srgq F(\bx,x^*)$.
As a consequence,
if $D_{q}F(\bx,x^*)$ is $q$-order positively definite with modulus $\lambda>0$,
then
$F$ is $q$-order strongly subregular at $(\bar x,x^*)$
with any modulus $\tau\in\left(0,\lambda\right)$.
\end{proposition}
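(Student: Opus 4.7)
The proof is a straightforward chaining of the two cited propositions, so the plan is simply to line them up. First, I note that by the remark following \eqref{2.3}, the $q$-order graphical derivative $D_{q}F(\bx,x^*)$ is $q$-order positively homogeneous, which makes Proposition~\ref{P3.12}(ii) applicable to it. This yields
\begin{equation*}
\|D_{q}F(\bx,x^*)\|^*_{q}\le \|D_{q}F(\bx,x^*)\|^\circleddash_{q}.
\end{equation*}
Next, since $\dim X<+\infty$ implies $\dim X^*<+\infty$, the finite-dimensional case of Proposition~\ref{P3.2} (where the inequality is an equality) gives
\begin{equation*}
\|D_{q}F(\bx,x^*)\|^\circleddash_{q}=\srgq F(\bx,x^*).
\end{equation*}
Combining the two displays delivers the first assertion $\|D_{q}F(\bx,x^*)\|^*_{q}\le\srgq F(\bx,x^*)$.

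For the consequence, suppose $D_{q}F(\bx,x^*)$ is $q$-order positively definite with modulus $\lambda>0$. By Definition~\ref{D2.4}, $\|D_{q}F(\bx,x^*)\|^*_{q}$ is the exact upper bound of all such admissible moduli, so $\lambda\le\|D_{q}F(\bx,x^*)\|^*_{q}$. Chaining with the first part yields $\lambda\le\srgq F(\bx,x^*)$. Since $\srgq F(\bx,x^*)$ is by Definition~\ref{D3.1}(i) the exact upper bound of all $\tau>0$ for which \eqref{D3.1-1} holds, any $\tau\in(0,\lambda)$ lies strictly below this supremum and hence serves as a valid modulus of $q$-order strong subregularity of $F$ at $(\bx,x^*)$.

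There is essentially no obstacle: the argument is just bookkeeping with the norm-like quantities $\|\cdot\|^*_q$, $\|\cdot\|^\circleddash_q$ and the modulus $\srgq$. The only minor point worth flagging when writing it up cleanly is the standard observation that $X^*$ inherits finite-dimensionality from $X$, which is what allows Proposition~\ref{P3.2} to be invoked in its equality form.
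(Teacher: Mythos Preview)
Your proof is correct and follows exactly the route the paper indicates: it is precisely the chaining of Proposition~\ref{P3.12}(ii) with the equality case of Proposition~\ref{P3.2}, using that $\dim X^*<+\infty$ when $\dim X<+\infty$. The unpacking of the ``consequence'' part via the supremum definitions of $\|\cdot\|^*_q$ and $\srgq$ is also sound (noting that the set of admissible moduli in Definition~\ref{D3.1}(i) is downward-closed, so any $\tau$ below the supremum is admissible).
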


Given a function $f:X\to\mathbb{R}\cup\{+\infty\}$, it is easy to check
(taking into account Remark~\ref{R3.2}(i)) that the $q$-order strong
subregularity of its epigraphical mapping
at $(\bar x,f(\bar x))$ reduces to
the property in the next definition.

\begin{definition}\label{D3.11}
Let $f:X\to\R\cup\{+\infty\}$.
A point $\bar x\in\dom f$ is a $q$-order sharp minimizer of $f$
with modulus $\tau>0$ if there exists a neighbourhood $U$ of $\bar x$ such that
\begin{equation}\label{D3.11-1}
\tau\|x-\bar x\|^q\leq f(x)-f(\bar x)\qdtx{for all} x\in U.
\end{equation}
The exact upper bound of all such $\tau>0$ is denoted by $\srpq f(\bar{x})$.
\end{definition}

If $\bar{x}$ is not a $q$-order sharp minimizer of $f$, we have
$\mathrm{shrp_{q}}\, f(\bar{x})=0$.

\begin{remark}
\begin{enumerate}
\item
The property in Definition~\ref{D3.11} is also known as \emph{isolated local minimum with order $q$}; cf. \cite{Stu86}.
\item
If $f(\bar{x})=0$ and $\bar{x}$ is a $q$-order sharp minimizer of $f$, then $\srpq f(\bar{x})$ coincides with the $q$-order error bound modulus $\Erq f(\bx)$ of $f$ at $\bar{x}$.
\end{enumerate}
\end{remark}

The next proposition is a consequence of Proposition~\ref{P3.2} and
Proposition~\ref{P2.8}(iii).

\begin{proposition}\label{P3.13}
Let $f:X\to\R\cup\{+\infty\}$ and
$\bar x\in\dom f$. Then
\begin{equation}\label{P3.13-1}
\srpq f(\bar{x})\le\|f'_q(\bx;\cdot)\|_q.
\end{equation}
If $\dim X<+\infty$, then \eqref{P3.13-1} holds as equality.
\end{proposition}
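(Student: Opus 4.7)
The plan is to interpret the $q$-order sharp minimizer property of $f$ at $\bar x$ as the $q$-order strong subregularity of the epigraphical mapping $\epi f:X\rightrightarrows\R$ at $(\bar x,f(\bar x))$, and then chain the already-established Proposition~\ref{P3.2} with Proposition~\ref{P2.8}(iii) to translate the resulting inequality into one about $f'_q(\bar x;\cdot)$.

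The first step is to verify the identification $\srpq f(\bar x)=\srgq(\epi f)(\bar x,f(\bar x))$. By Remark~\ref{R3.2}(i) I may take $V=\R$ in Definition~\ref{D3.1}(i), so the strong subregularity inequality for $\epi f$ reads $\tau\|x-\bar x\|^q\le d(f(\bar x),[f(x),+\infty))=(f(x)-f(\bar x))_+$ on some neighbourhood $U$ of $\bar x$. For $x\ne\bar x$ the left-hand side is strictly positive, forcing $f(x)>f(\bar x)$ and collapsing the positive part, so the condition reduces exactly to \eqref{D3.11-1}; conversely, \eqref{D3.11-1} trivially gives back this inequality. The two moduli therefore coincide.

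The second step is to apply Proposition~\ref{P3.2} to the mapping $\epi f$, giving
\begin{equation*}
\srgq(\epi f)(\bar x,f(\bar x))\le\|D_q(\epi f)(\bar x,f(\bar x))\|^\circleddash_q,
\end{equation*}
and then to invoke Proposition~\ref{P2.8}(iii), which identifies the right-hand side with $\|f'_q(\bar x;\cdot)\|_q$. Chaining the two yields \eqref{P3.13-1}.

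For the equality assertion under $\dim X<+\infty$, I note that the codomain $\R$ of $\epi f$ has dimension $1<+\infty$, so the second clause of Proposition~\ref{P3.2} upgrades the above inequality to an equality, while the identification coming from Proposition~\ref{P2.8}(iii) is already an equality in full generality. The only real obstacle is the bookkeeping in the first step: one has to recognise that the ``positive part'' appearing in the distance $d(f(\bar x),[f(x),+\infty))$ is automatically realised as $f(x)-f(\bar x)$ whenever $x\ne\bar x$, so that no information is lost in passing to the epigraphical reformulation.
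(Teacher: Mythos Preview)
Your proof is correct and follows exactly the approach the paper intends: the paper states that Proposition~\ref{P3.13} ``is a consequence of Proposition~\ref{P3.2} and Proposition~\ref{P2.8}(iii)'', relying on the observation (made just before Definition~\ref{D3.11}) that $q$-order strong subregularity of $\epi f$ at $(\bar x,f(\bar x))$ reduces to the $q$-order sharp minimum property. You have simply spelled out that reduction explicitly and then chained the two cited results, including the finite-dimensionality of $\R$ for the equality case.
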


The following lemma describing H\"{o}lder sharp minimizers in terms of the
H\"{o}lder strong subregularity of the subdifferential mappings is a
reformulation of \cite[Theorem 4.1]{ZheNg15} in the convex setting.

\begin{lemma}
\label{T4.3}
Let $X$ be a Banach space, $f:X\to\mathbb{R}\cup\{+\infty\}$
lower semicontinuous and convex, and $\bar x\in\dom f$ be a local minimizer of $f$.
Consider the following assertions:
\begin{enumerate}
\item
$\bar x$ is a $(q+1)$-order sharp minimizer of $f$
with modulus $\rho>0$;
\item
$\partial f$ is $q$-order strongly subregular at $(\bar x, 0)$
with modulus $\tau>0$.
\end{enumerate}
Then {\rm (i) $\Rightarrow$ (ii)} with $\tau:=\rho$, and
{\rm (ii) $\Rightarrow$ (i)} with $\rho:=\frac{q^{q}}{(q+1)^{q+1}}\tau
$.
As a consequence,
$\bar x$ is a $(q+1)$-order sharp minimizer of $f$ if and only if
$\partial f$ is $q$-order strongly subregular at $(\bar x, 0)$.
\end{lemma}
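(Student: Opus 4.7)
The plan is to prove the two implications separately. Implication (i)~$\Rightarrow$~(ii) follows directly from the convex subgradient inequality, while the converse is the substantial direction: it requires a variational argument based on Ekeland's variational principle, followed by a one-variable optimization which produces the constant $\frac{q^q}{(q+1)^{q+1}}$.

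For (i)~$\Rightarrow$~(ii), let $U$ be the neighbourhood from~\eqref{D3.11-1} on which $f$ is $(q+1)$-order sharp at $\bar x$ with modulus $\rho$. Fix any $x\in U$ and any $x^*\in\partial f(x)$. The convex subgradient inequality $f(\bar x)\ge f(x)+\langle x^*,\bar x-x\rangle$ rearranges to $\langle x^*,x-\bar x\rangle\ge f(x)-f(\bar x)\ge \rho\|x-\bar x\|^{q+1}$, so $\|x^*\|\ge \rho\|x-\bar x\|^q$ by Cauchy--Schwarz. Taking the infimum over $x^*\in\partial f(x)$ and invoking Remark~\ref{R3.2}(i) to take $V=Y$ yields the strong subregularity inequality \eqref{D3.1-1} with $\tau:=\rho$.

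For (ii)~$\Rightarrow$~(i), I use that $f$ is convex and lsc, so the local minimum $\bar x$ is in fact a global minimum and $\inf_X f=f(\bar x)$, making Ekeland's principle globally applicable. Fix $x$ close enough to $\bar x$ (to be specified), set $c:=\|x-\bar x\|$ and $\varepsilon:=f(x)-f(\bar x)$, and choose the parameter $\lambda:=c/(q+1)$. Assuming $c>0$ (the case $c=0$ is trivial), Ekeland's principle applied to $f$ at $x$ with parameters $\varepsilon$ and $\lambda$ produces $y$ with $\|y-x\|\le\lambda$ that minimizes $z\mapsto f(z)+(\varepsilon/\lambda)\|z-y\|$. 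The Moreau--Rockafellar sum rule (applicable because the norm term is continuous) combined with Fermat's rule yields some $y^*\in\partial f(y)$ with $\|y^*\|\le\varepsilon/\lambda$. Restricting $x$ to a small ball around $\bar x$ ensures, via the triangle inequality $\|y-\bar x\|\le c+\lambda=\tfrac{q+2}{q+1}c$, that $y$ lies in the neighbourhood from~(ii); hence
\[
\tau(c-\lambda)^q\le \tau\|y-\bar x\|^q\le d(0,\partial f(y))\le \|y^*\|\le \varepsilon/\lambda,
\]
which rearranges to $\varepsilon\ge \tau\lambda(c-\lambda)^q$.

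Substituting $\lambda=c/(q+1)$ and using that the map $u\mapsto u(1-u)^q$ on $(0,1)$ attains its maximum $q^q/(q+1)^{q+1}$ at $u=1/(q+1)$ gives $\varepsilon\ge \tau c^{q+1}\cdot q^q/(q+1)^{q+1}$, which is precisely the $(q+1)$-order sharpness inequality with $\rho=\tau q^q/(q+1)^{q+1}$. The main obstacle is identifying the right variational tool; once Ekeland's principle is invoked, the optimal modulus $q^q/(q+1)^{q+1}$ drops out of an elementary one-variable calculation, and the remaining technicalities (ensuring $y$ stays in the neighbourhood from~(ii) and applying the convex subdifferential sum rule) are routine triangle-inequality estimates and standard facts from convex analysis.
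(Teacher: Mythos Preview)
Your proof is correct. Note, however, that the paper does not actually prove this lemma: it is introduced as ``a reformulation of \cite[Theorem 4.1]{ZheNg15} in the convex setting'' and simply cited. Your argument --- the convex subgradient inequality for (i)~$\Rightarrow$~(ii), and Ekeland's variational principle plus the Moreau--Rockafellar sum rule for (ii)~$\Rightarrow$~(i), with the choice $\lambda=c/(q+1)$ optimizing $\lambda(c-\lambda)^q$ to produce the constant $q^q/(q+1)^{q+1}$ --- is precisely the standard route underlying such results, and would serve as a self-contained replacement for the external citation.

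Two very minor points. First, the edge case $\varepsilon=0$ (i.e.\ $f(x)=f(\bar x)$ with $c>0$) should be handled separately, since Ekeland's principle as usually stated assumes $\varepsilon>0$; but then $0\in\partial f(x)$ directly, and the subregularity inequality applied at $x$ forces $x=\bar x$, a contradiction. Second, ``Cauchy--Schwarz'' should really read ``the duality inequality $\langle x^*,u\rangle\le\|x^*\|\,\|u\|$'', since $x^*$ lives in $X^*$ rather than in a Hilbert space. Neither affects the validity of the argument.
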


Next we give
characterizations of H\"older sharp minimizers in terms of H\"{o}lder
graphical derivatives of the subdifferential mapping. The theorem below is
partially motivated by \cite[Corollary 3.7]{AraGeo14}, which provides a
characterization of the strong subregularity in terms of the
positive-definiteness of the graphical derivative. The modulus estimate in the
following theorem is inspired by \cite[Theorem 3.6]{MorNgh15}, where a
characterization of tilt stability of local minimizers for
extended-real-valued functions is derived via the second-order subdifferential.

\begin{theorem}
\label{T1.12}
Let $X$ be a Banach space, $f:X\to\mathbb{R}\cup\{+\infty\}$
lower semicontinuous and convex, and $\bar x\in\dom f$ be a local minimizer of $f$.
Consider the following assertions:
\begin{enumerate}
\item
$\bar x$ is a $(q+1)$-order sharp minimizer of $f$
with modulus $\rho>0$;
\item
$D_q\partial f(\bar x, 0)$ is $q$-order positively definite with modulus $\lambda>0$.
\end{enumerate}
Then {\rm (i) $\Rightarrow$ (ii)} with $\la:=\rho$.
If $\dim X<+\infty$, then, for any $\tau\in\left(0,\lambda\right)$, {\rm (ii) $\Rightarrow$ (i)} with $\rho:=\frac{q^{q}}{(q+1)^{q+1}}\tau$.
As a consequence, if $\dim X<+\infty$, then
$\bar x$ is a $(q+1)$-order sharp minimizer of $f$ if and only if
$D_q\partial f(\bar x, 0)$ is $q$-order positively definite, and
\begin{equation}
\label{T1.12-1}
\frac{q^{q}}{(q+1)^{q+1}}\|D_q\partial f(\bx,0)\|^*_q \le{\rm shrp_{q+1}}\, f(\bx)\le \|D_q\partial f(\bx,0)\|^*_q.
\end{equation}
\end{theorem}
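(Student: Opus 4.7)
The plan is to establish the two implications separately and then extract \eqref{T1.12-1} by tracking the constants. For (i)$\Rightarrow$(ii) I would argue directly from the definitions, \emph{without} routing through Lemma~\ref{T4.3}, since convexity of $f$ turns out to be indispensable here. Fix any $(u,u^*)\in\gph D_q\sd f(\bx,0)$ and, by \eqref{2.3}, pick sequences $(u_k,u^*_k)\to(u,u^*)$ and $t_k\downarrow0$ with $t_k^q u^*_k\in\sd f(\bx+t_k u_k)$. The subgradient inequality applied at $\bx$ gives
\[
f(\bx)\ge f(\bx+t_k u_k)+\ang{t_k^q u^*_k,\,-t_k u_k}=f(\bx+t_k u_k)-t_k^{q+1}\ang{u^*_k,u_k}.
\]
For $k$ large, $\bx+t_k u_k$ lies in the neighbourhood provided by \eqref{D3.11-1} (with $q+1$ in place of $q$ and modulus $\rho$), so $f(\bx+t_k u_k)-f(\bx)\ge\rho t_k^{q+1}\|u_k\|^{q+1}$. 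Cancelling $t_k^{q+1}$ and passing to the limit yields $\ang{u^*,u}\ge\rho\|u\|^{q+1}$, so by Definition~\ref{D2.4} $D_q\sd f(\bx,0)$ is $q$-order positively definite with modulus $\rho$, which is (ii) with $\lambda:=\rho$.

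For (ii)$\Rightarrow$(i) I would assume $\dim X<+\infty$ and chain the existing results. Positive definiteness of $D_q\sd f(\bx,0)$ with modulus $\lambda>0$ triggers Proposition~\ref{P3.6}, yielding $q$-order strong subregularity of $\sd f$ at $(\bx,0)$ with any prescribed $\tau\in(0,\lambda)$. The implication (ii)$\Rightarrow$(i) in Lemma~\ref{T4.3} then converts this into a $(q+1)$-order sharp minimum at $\bx$ with modulus $\rho:=\frac{q^q}{(q+1)^{q+1}}\tau$. Combined with the first paragraph this gives the asserted equivalence in the finite-dimensional case.

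The two-sided bound \eqref{T1.12-1} is then bookkeeping. From (i)$\Rightarrow$(ii), every admissible sharp-minimum modulus $\rho$ is an admissible positive-definiteness modulus, so $\rho\le\|D_q\sd f(\bx,0)\|^*_q$; taking the sup over $\rho$ gives ${\rm shrp_{q+1}}\,f(\bx)\le\|D_q\sd f(\bx,0)\|^*_q$. From (ii)$\Rightarrow$(i), for any $\lambda<\|D_q\sd f(\bx,0)\|^*_q$ and $\tau\in(0,\lambda)$, the value $\frac{q^q}{(q+1)^{q+1}}\tau$ is an admissible sharp-minimum modulus, so passing to the sup over $\tau$ and then over $\lambda$ yields the lower bound.

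The main obstacle I anticipate is precisely the direction (i)$\Rightarrow$(ii): one cannot simply combine Lemma~\ref{T4.3}(i)$\Rightarrow$(ii) with Propositions~\ref{P3.2} and \ref{P3.12}(ii), since these only give $\|D_q\sd f(\bx,0)\|^*_q\le\|D_q\sd f(\bx,0)\|^\circleddash_q=\srgq(\sd f)(\bx,0)$, an inequality pointing the wrong way for extracting positive definiteness from strong subregularity. The convex subgradient inequality at the base point is the ingredient that converts the $(q+1)$-order growth of $f$ into the inner-product lower bound required by Definition~\ref{D2.4}.
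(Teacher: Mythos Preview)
Your proposal is correct and follows essentially the same approach as the paper. For (i)$\Rightarrow$(ii) you use the convex subgradient inequality at $\bar x$ together with the $(q+1)$-order growth bound along the approximating sequence, and for (ii)$\Rightarrow$(i) you chain Proposition~\ref{P3.6} with Lemma~\ref{T4.3}; this is exactly the paper's argument, with your final paragraph making explicit why the direct route is needed for the forward implication.
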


\begin{proof}
Let (i) hold.
Let $u\in X$ and $u^*\in D_q\partial f(\bar x, 0)(u)$, i.e.
there are sequences $t_k\downarrow0$ and $(u_k,u_k^*)\to(u,u^*)$ such that $t_k^qu_k^*\in\partial f(\bx+t_ku_k)$ for all $k\in\mathbb{N}$.
For all sufficiently large $k$, we have $\rho(t_k\|u_k\|)^{q+1}\leq f(\bx+t_ku_k)-f(\bx)\le \langle t_k^{q}u_k^*,t_k u_k\rangle$, and consequently,
$\rho\|u\|^{q+1}\leq \langle u^*,u\rangle$, i.e. $D_q\partial f(\bar x, 0)$ is $q$-order positively definite with modulus $\lambda:=\rho$.
Let (ii) hold,
$\dim X<+\infty$, and $\tau\in\left(0,\lambda\right)$.
By Proposition~\ref{P3.6}, $\partial f$ is $q$-order strongly subregular at $(\bar x, 0)$
with modulus $\tau$.
By Lemma~\ref{T4.3}, $\bar x$ is a $(q+1)$-or\-der sharp minimizer of $f$
with modulus $\frac{q^{q}}{(q+1)^{q+1}}\tau$.
\qed\end{proof}

\if{
\AK{10/08/19.
I have waived the assumptions that $X$ is Hilbert and $q\le\frac{1}{2}$:
I don't see where they are needed.
The convexity assumptions in Lemma \ref{T4.3} and Theorem~\ref{T1.12} can be weakened.
In \cite{ZheNg15}, they seem to use Clarke subdifferentials.
---
JZ (16/12/19) thinks that the convexity assumptions cannot be weakened.}
\AK{26/01/21.
Implication {\rm (ii) $\Rightarrow$ (i)} in \cite[Theorem 4.1]{ZheNg15} (Lemma \ref{T4.3} above) is formulated without convexity.}
}\fi

The next example illustrates application of Theorem~\ref{T1.12} for checking the order of sharp minimizers.

\begin{example}
Let $f(x)=x^{2n}$ for some integer $n>0$ and all $x\in\R$, and $\bx=0$.
Thus, $f'(x)=x^{2n-1}$ for all $x\in\R$ and,
by definition~\eqref{2.3}, $y\in D_{q}\sd f(0,0)(x)$ if and only if there exist sequences $(x_{k},y_{k})\to(x,y)$ and $t_{k}\downarrow0$ such that
$y_{k}=t_{k}^{2n-1-q}x_{k}^{2n-1}$ for all
$k\in\mathbb{N}$, or equivalently, $y=\lim_{k\to\infty}t_{k}^{2n-1-q}x^{2n-1}$.
Thus, there are three distinct possibilities.

\underline{$0<q<2n-1$}.
$D_{q}\sd f(0,0)(x)=\{0\}$ for all $x\in\R$.
Thus, $yx=0$ for all $(x,y)\in\gph D_{q}\sd f(0,0)$, i.e. $D_{q}\sd f(0,0)$ is not $q$-order positively definite.
By Theorem~\ref{T1.12}, $0$ is not a $(q+1)$-order sharp minimizer of $f$.

\underline{$q=2n-1$}.
$D_{q}\sd f(0,0)(x)=\{x^{2n-1}\}$ for all $x\in\R$.
Thus, $yx=x^{2n}=|x|^{2n}$ for all $(x,y)\in\gph D_{q}\sd f(0,0)$, i.e. $D_{q}\sd f(0,0)$ is $(2n-1)$-order positively definite with any modulus $\la\in(0,1]$, and $\|D_q\partial f(0,0)\|^*_q=1$.
By Theorem~\ref{T1.12}, $0$ is a $2n$-order sharp minimizer of $f$, and $\frac{(2n-1)^{2n-1}}{(2n)^{2n}}\le{\rm shrp_{2n}}\, f(0)\le1$.

\underline{$q>2n-1$}.
$D_{q}\sd f(0,0)(0)=\{0\}$, and $D_{q}\sd f(0,0)(x)=\es$ for all $x\ne0$.
Thus, $yx=0$ for all $(x,y)\in\gph D_{q}\sd f(0,0)$, i.e. $D_{q}\sd f(0,0)$ is not $q$-order positively definite.
By Theorem~\ref{T1.12}, $0$ is not a $(q+1)$-order sharp minimizer of $f$.
\sloppy

Of course, in this simple example, the same conclusions can be obtained directly from Definition~\ref{D3.11}.
Moreover, ${\rm shrp_{2n}}\, f(0)=1$, i.e. the lower estimate in \eqref{T1.12-1} is not sharp.
\end{example}

Comparing the statements of Proposition~\ref{P3.6}, Lemma \ref{T4.3} and
Theorem~\ref{T1.12}, we arrive at the following corollary, which provides an
important special case when the implication in Proposition~\ref{P3.6} holds as equivalence.

\begin{corollary}
Let $\dim X<+\infty$, $f:X\to\mathbb{R}\cup\{+\infty\}$ be
lower semicontinuous and convex, and $\bar x\in\dom f$ be a local minimizer of $f$.
Then $D_q\partial f(\bar x, 0)$ is $q$-order positively definite if and only if
$\partial f$ is $q$-order strongly subregular at $(\bar x, 0)$, and
\begin{equation*}
\|D_q\partial f(\bx,0)\|^*_q \le\srgq\sd f(\bx,0)\le \frac{(q+1)^{q+1}}{q^{q}}\|D_q\partial f(\bx,0)\|^*_q.
\end{equation*}
\end{corollary}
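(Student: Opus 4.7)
My plan is to obtain both inequalities and the equivalence by chaining the three results that have just been established: Proposition~\ref{P3.6}, Lemma~\ref{T4.3}, and Theorem~\ref{T1.12}. The left-hand inequality $\|D_q\partial f(\bx,0)\|^*_q \le \srgq\sd f(\bx,0)$ is exactly the content of Proposition~\ref{P3.6} applied with $F:=\sd f$ and $x^*:=0$, so it requires no further argument and is the easy end of the estimate.

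For the equivalence I would argue as follows. The direction ``$D_q\partial f(\bar x,0)$ positively definite $\Rightarrow$ $\partial f$ strongly subregular at $(\bx,0)$'' is the ``as a consequence'' part of Proposition~\ref{P3.6}. For the converse, assume that $\partial f$ is $q$-order strongly subregular at $(\bx,0)$ with some modulus $\tau>0$. By the implication \mbox{(ii)$\Rightarrow$(i)} in Lemma~\ref{T4.3}, $\bx$ is then a $(q+1)$-order sharp minimizer of $f$ with modulus $\frac{q^q}{(q+1)^{q+1}}\tau>0$. Invoking the implication \mbox{(i)$\Rightarrow$(ii)} of Theorem~\ref{T1.12} (which holds without the finite-dimensionality assumption), we conclude that $D_q\partial f(\bx,0)$ is $q$-order positively definite (with modulus $\la:=\frac{q^q}{(q+1)^{q+1}}\tau$).

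For the right-hand inequality, I would track the moduli quantitatively through the same chain. Given any $\tau\in(0,\srgq\sd f(\bx,0))$, Lemma~\ref{T4.3} yields ${\rm shrp}_{q+1}\,f(\bx)\ge \frac{q^q}{(q+1)^{q+1}}\tau$. Since $\dim X<+\infty$, Theorem~\ref{T1.12} (specifically the right inequality in \eqref{T1.12-1}) gives ${\rm shrp}_{q+1}\,f(\bx)\le \|D_q\sd f(\bx,0)\|^*_q$. Combining these two estimates and letting $\tau\uparrow\srgq\sd f(\bx,0)$ produces
\begin{equation*}
\frac{q^q}{(q+1)^{q+1}}\,\srgq\sd f(\bx,0)\;\le\;\|D_q\sd f(\bx,0)\|^*_q,
\end{equation*}
which is equivalent to the desired upper bound $\srgq\sd f(\bx,0)\le \frac{(q+1)^{q+1}}{q^q}\|D_q\sd f(\bx,0)\|^*_q$.

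There is no genuine obstacle here; the only thing to be careful about is bookkeeping of moduli and strict versus non-strict inequalities (Proposition~\ref{P3.6} and Lemma~\ref{T4.3} are formulated in terms of fixed moduli $\tau$ strictly inside the relevant ranges, so the passage to the supremum needs the standard argument of letting $\tau$ approach $\srgq\sd f(\bx,0)$ from below). The trivial case where $\srgq\sd f(\bx,0)=0$ makes the upper estimate vacuous, and the case $\|D_q\sd f(\bx,0)\|^*_q=+\infty$ makes it trivial, so both bounds hold without any separate degenerate analysis.
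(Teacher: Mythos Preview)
Your proposal is correct and follows exactly the route the paper intends: the corollary is stated without proof, prefaced only by ``Comparing the statements of Proposition~\ref{P3.6}, Lemma~\ref{T4.3} and Theorem~\ref{T1.12}, we arrive at the following corollary,'' and you have supplied precisely those details. One minor remark: the right inequality in \eqref{T1.12-1} already follows from the implication (i)$\Rightarrow$(ii) of Theorem~\ref{T1.12}, which does not require $\dim X<+\infty$, so you need finite-dimensionality only for Proposition~\ref{P3.6}; this does not affect correctness since the hypothesis is present anyway.
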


\section{$q$-order isolated calmness in linear semi-infinite optimization}

\label{S4}

In this section, we consider a canonically perturbed linear semi-infinite
optimization problem:
\[
\begin{aligned} P(c,b):\quad& \text{minimize} &&\langle c,x\rangle\\ &\text{subject to} &&\left\langle a_t,x \right\rangle\leq b_t,\;t\in T, \end{aligned}
\]
where $x\in\mathbb{R}^{n}$ is the vector of variables, $c\in\mathbb{R}^{n}$,
$\langle\cdot,\cdot\rangle$ represents the usual inner product in
$\mathbb{R}^{n}$, $T$ is a compact Hausdorff space, and the function
$t\mapsto(a_{t},b_{t})$ is continuous on $T$. In this setting, the pair
$(c,b)\in\mathbb{R}^{n}\times C(T,\mathbb{R})$ is regarded as the perturbation
parameter. The parameter space $\mathbb{R}^{n}\times C(T,\mathbb{R})$ is
endowed with the uniform convergence topology through the maximum norm
$\|(c,b)\|:=\max\{\|c\|,\|b\|_{\infty}\},$ where $\|\cdot\|$ is the Euclidean
norm in $\mathbb{R}^{n}$ and $\|b\|_{\infty}:=\max_{t\in T}|b_{t}|$.

The \emph{feasible set} and \emph{solution} mappings corresponding to the
above problem are
defined, respectively, by
\begin{gather}
\label{F}
\mathcal{F}(b):=\{x\in\mathbb{R}^{n}\mid\left\langle a_{t},x
\right\rangle \leq b_{t},\;t\in T\},\quad b\in C(T,\mathbb{R}),\\
\label{S}
\mathcal{S}(c,b):=\{x\in\mathcal{F}(b)\mid x\;\;\mathrm{solves}%
\;\;P(c,b)\},\quad(c,b)\in\mathbb{R}^{n}\times C(T,\mathbb{R}).
\end{gather}

From now on, we assume a point $((\bar{c},\bar{b}),\bar{x})\in\mathrm{gph}%
\,\mathcal{S}$ to be given. We are going to consider also the partial solution
mapping $\mathcal{S}_{\bar c}:C(T,\mathbb{R})\rightrightarrows\mathbb{R}^{n}$
given by $\mathcal{S}_{\bar c}(b)=\mathcal{S}(\bar c,b)$ and the \textit{level
set} mapping
\begin{gather*}
\mathcal{L}(\alpha,b):=\{x\in\mathcal{F}(b)\mid\langle\bar{c},x\rangle
\leq\alpha\}, \quad(\alpha,b)\in\mathbb{R}\times C(T,\mathbb{R}),
\end{gather*}
and employ the following convex and continuous function:
\begin{gather}
\label{barf}{f}(x):=\max\{\langle\bar{c},x-\bar{x}\rangle,\;\max_{t\in
T}(\left\langle a_{t},x \right\rangle -\bar{b}_{t})\},\quad x\in\mathbb{R}%
^{n}.
\end{gather}
Observe that $f(\bar x)=0$, and
\begin{gather}
\label{marco6}\mathcal{S}(\bar{c},\bar{b})=\left[  {f}=0\right]  =\left[  {f}
\leq0\right]  =\mathcal{L}\langle\bar{c},\bar{x}\rangle,\bar{b}).
\end{gather}

The problem $P(c,b)$ satisfies the \textit{Slater condition} if there exists a
point $\hat{x}\in\mathbb{R}^{n}$ such that $\left\langle a_{t},\hat{x}
\right\rangle <b_{t}$ for all $t\in T$. The set of \textit{active indices} at
$x\in\mathcal{F}(b)$ is defined by $T_{b}(x):=\{{t\in T}\mid\left\langle
a_{t},{x} \right\rangle =b_{t}\}.$

The following lemma is an analogue of \cite[Proposition 4.5]{KruLopYanZhu19}.

\begin{lemma}
\label{marco7} Suppose that $P(\bar{c},\bar{b})$ satisfies the Slater
condition, and $\mathcal{S}(\bar{c},\bar{b})=\{\bar{x}\}$. If $\mathcal{L}$
does not possess $q$-order isolated calmness property at $((\langle\bar
{c},\bar{x}\rangle,\bar{b}),\bar{x})\in\mathrm{gph}\,(\mathcal{L})$, then
there exist a sequence $\{(b_{k},x_{k})\}\subset\mathrm{gph}\,\mathcal{F}$
such that $x_{k}\neq\bar{x}$ for all $k\in\mathbb{N}$, and
\[
\lim_{k\rightarrow+\infty}(b_{k},x_{k})=(\bar{b},\bar{x}),\quad\lim
_{k\rightarrow+\infty}\frac{\Vert b_{k}-\bar{b}\Vert_{\infty}}{\left\Vert
x_{k}-\bar{x}\right\Vert ^{q}}=0,
\]
a finite subset $T_{0}\subset\cap_{k\in\mathbb{N}}T_{b_{k}}(x_{k})$, and
positive scalars $\gamma_{t},\ t\in T_{0}$, satisfying
\begin{equation}
-\bar{c}\in\sum_{t\in T_{0}}\gamma_{t}a_{t}.\label{3.17}%
\end{equation}
\end{lemma}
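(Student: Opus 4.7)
The plan is to unpack the failure of $q$-order isolated calmness as a failure of $q$-order sharp minimality of the auxiliary convex function $f$ from \eqref{barf} at $\bar{x}$, and then to invoke the classical Haar--Goberna--L\'opez strong duality for LSIP under the Slater condition to extract the required conic representation of $-\bar{c}$.

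First, applying the negation of Definition~\ref{D3.1}(ii) to $\mathcal{L}$ at $((\langle\bar{c},\bar{x}\rangle,\bar{b}),\bar{x})$, I would extract sequences $(\alpha_k,b_k)\to(\langle\bar{c},\bar{x}\rangle,\bar{b})$ and $x_k\in\mathcal{L}(\alpha_k,b_k)\setminus\{\bar{x}\}$ with $x_k\to\bar{x}$ and $\max\{|\alpha_k-\langle\bar{c},\bar{x}\rangle|,\,\|b_k-\bar{b}\|_\infty\}/\|x_k-\bar{x}\|^q\to 0$. From $\langle\bar{c},x_k\rangle\le\alpha_k$, $\langle a_t,x_k\rangle\le(b_k)_t$ for all $t$, and \eqref{barf} together with \eqref{marco6} and the uniqueness $\mathcal{S}(\bar{c},\bar{b})=\{\bar{x}\}$, one obtains $0<f(x_k)=o(\|x_k-\bar{x}\|^q)$, i.e., $\bar{x}$ is not a $q$-order sharp minimizer of $f$. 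Passing to a subsequence, the unit vectors $u_k:=(x_k-\bar{x})/\|x_k-\bar{x}\|$ converge to some $u$ with $\|u\|=1$, and dividing the same inequalities by $\|x_k-\bar{x}\|$ yields (in the LSIP-dominant regime $q\le 1$) the limits $\langle a_t,u\rangle\le 0$ for every $t\in T_{\bar{b}}(\bar{x})$ and $\langle\bar{c},u\rangle\le 0$.

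Second, since $P(\bar{c},\bar{b})$ satisfies Slater and $\bar{x}$ is optimal, the classical Haar duality theorem (see Goberna--L\'opez) combined with Carath\'eodory's theorem produces a finite nonempty set $\hat{T}\subset T_{\bar{b}}(\bar{x})$ with $|\hat{T}|\le n$ and positive scalars $\hat{\gamma}_t$ such that $-\bar{c}=\sum_{t\in\hat{T}}\hat{\gamma}_t a_t$. Pairing this identity with $u$ and using $\langle\bar{c},u\rangle\le 0$, $\langle a_t,u\rangle\le 0$ for $t\in\hat{T}$ and $\hat{\gamma}_t>0$, forces the orthogonality $\langle a_t,u\rangle=0$ for \emph{every} $t\in\hat{T}$; consequently $\langle a_t,x_k-\bar{x}\rangle=\|x_k-\bar{x}\|\langle a_t,u_k\rangle=o(\|x_k-\bar{x}\|)$.

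Third, to enforce $\hat{T}\subset T_{b_k}(x_k)$, I would redefine $b_k$ by setting $(b_k)_t:=\langle a_t,x_k\rangle$ for $t\in\hat{T}$ and leaving $(b_k)_t$ unchanged for $t\notin\hat{T}$; then $x_k\in\mathcal{F}(b_k)$, $\hat{T}\subset T_{b_k}(x_k)$ by construction, and for $t\in\hat{T}\subset T_{\bar{b}}(\bar{x})$ one has $|(b_k)_t-\bar{b}_t|=|\langle a_t,x_k-\bar{x}\rangle|=o(\|x_k-\bar{x}\|)=o(\|x_k-\bar{x}\|^q)$ (in the range $q\le 1$), so the required rate survives. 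Setting $T_0:=\hat{T}$ and $\gamma_t:=\hat{\gamma}_t$ then delivers \eqref{3.17}. The main obstacle is the orthogonality $\langle a_t,u\rangle=0$ for every $t\in\hat{T}$: without it, the modification in Step~3 would yield only $O(\|x_k-\bar{x}\|)$ and destroy the rate $o(\|x_k-\bar{x}\|^q)$. This orthogonality rests on the positivity of each Haar multiplier combined with the sign constraints on $\langle a_t,u\rangle$ and $\langle\bar{c},u\rangle$, and is the crux of the argument.
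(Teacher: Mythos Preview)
The paper does not give its own proof of this lemma; it merely records it as ``an analogue of \cite[Proposition~4.5]{KruLopYanZhu19}'' and then uses it. So there is no reference proof to compare against, and your proposal has to stand on its own.

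There is a genuine gap, and it is precisely at the point you flag as ``the crux''. Your argument splits into two passages through the scale $\|x_k-\bar x\|$: in Step~1 you divide the feasibility inequalities by $\|x_k-\bar x\|$ to obtain $\langle a_t,u\rangle\le 0$ and $\langle\bar c,u\rangle\le 0$; in Step~3 you use the orthogonality $\langle a_t,u\rangle=0$ to get $|\langle a_t,x_k-\bar x\rangle|=o(\|x_k-\bar x\|)$ and then claim $o(\|x_k-\bar x\|)=o(\|x_k-\bar x\|^q)$. The second implication is correct for $q\le 1$, but the first step needs $\|b_k-\bar b\|_\infty/\|x_k-\bar x\|\to 0$, and from $\|b_k-\bar b\|_\infty=o(\|x_k-\bar x\|^q)$ this follows only when $q\ge 1$. (Your parenthetical ``$q\le 1$'' in Step~1 is the wrong direction.) So as written the chain closes only at $q=1$, whereas the lemma is needed for all $q>0$ --- Theorem~\ref{T4.2} explicitly uses the case $q>1$.

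The remedy is to bypass the limit direction $u$ altogether and extract the two--sided estimate directly from the KKT identity. From the failure of isolated calmness you already have $0<f(x_k)=o(\|x_k-\bar x\|^q)$, hence $\langle\bar c,x_k-\bar x\rangle\le f(x_k)$ and $\langle a_t,x_k-\bar x\rangle\le f(x_k)$ for every $t\in\hat T\subset T_{\bar b}(\bar x)$. Pairing $-\bar c=\sum_{t\in\hat T}\hat\gamma_t a_t$ with $x_k-\bar x$ gives, for each fixed $s\in\hat T$,
\[
\hat\gamma_s\,\langle a_s,x_k-\bar x\rangle
=-\langle\bar c,x_k-\bar x\rangle-\sum_{t\ne s}\hat\gamma_t\,\langle a_t,x_k-\bar x\rangle
\ge -\Bigl(1+\sum_{t\ne s}\hat\gamma_t\Bigr)f(x_k),
\]
so $|\langle a_s,x_k-\bar x\rangle|\le C\,f(x_k)=o(\|x_k-\bar x\|^q)$ with a constant $C$ depending only on the $\hat\gamma_t$. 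This delivers exactly the rate you need in Step~3 for every $q>0$, with no detour through $u$. A secondary issue: in Step~3 you alter $b_k$ at the finitely many points of $\hat T$ and leave it unchanged elsewhere; in general this destroys continuity, and the parameter space is $C(T,\mathbb{R})$. You must build the new $b_k$ as a continuous function (for instance via Urysohn/Tietze, or by a global formula such as $(b_k)_t:=\max\{\bar b_t,\langle a_t,x_k\rangle\}$ off a neighbourhood of $\hat T$ and a continuous interpolation near $\hat T$), using the estimate above to keep $\|b_k-\bar b\|_\infty=o(\|x_k-\bar x\|^q)$.
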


\begin{theorem}\label{T4.2}
Suppose that $P(\bar{c},\bar{b})$ satisfies the Slater condition.
Consider
the following assertions:
\begin{enumerate}
\item
$\mathcal{S}$ possesses $q$-order isolated calmness property at $((\bar{c},\bar{b}),\bar{x})$;
\item
$\mathcal{S}_{\bar{c}}$ possesses $q$-order isolated calmness property at $(\bar{b},\bar{x})$;
\item
$\mathcal{L}$ possesses $q$-order isolated calmness property at $((\langle \bar{c},\bar{x}\rangle,\bar{b}),\bar{x})$;
\item
$\bar x$ is a $q$-order sharp minimizer of $f$;
\item
$\|f'_q(\bx;\cdot)\|_q>0$;
\item
$\|D_{\frac{1}{q}} S_{\bar{c}}(\bar{b},\bar{x})\|^+_\frac{1}{q} <+\infty$.
\cnta
\end{enumerate}
Then {\rm (i) $\Leftrightarrow$ (ii) $\Leftrightarrow$ (iii) $\Leftrightarrow$ (iv) $\Leftrightarrow$ (v) $\Rightarrow$ (vi)}.
{If} $T$ is finite, then all the assertions are equivalent,
{and}
\begin{equation}\label{T4.2-1}
\clmq\,\mathcal{S}_{\bar{c}}(\bar{b},\bar{x}%
)=\left(\left\Vert D_{\frac{1}{q}}\mathcal{S}_{\bar{c}}(\bar{b},\bar{x}%
)\right\Vert^+_\frac{1}{q}\right)^{-q}.
\end{equation}
If $q>1$, then assertions {\rm (i)--(v)} are equivalent to the next one:
\begin{enumerate}
\cntb
\item
$D_{q-1}\partial f(\bar{x},0)$ is $(q-1)$-order
positively definite.
\end{enumerate}
\end{theorem}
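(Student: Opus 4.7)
The plan is to anchor all the equivalences on the convex continuous function $f$ defined in \eqref{barf}, noting first that $\bar x$ is a local minimizer of $f$: indeed $f(\bar x)=0$, and for $x$ near $\bar x$ either $x\in\mathcal F(\bar b)$ (so $f(x)\geq\langle\bar c,x-\bar x\rangle\geq 0$ by optimality of $\bar x$) or $x$ violates some constraint (so the second max in \eqref{barf} is positive).

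First I would establish (iii) $\Leftrightarrow$ (iv). For $x\in\mathcal{L}(\alpha,b)$ one has $\langle\bar c,x-\bar x\rangle\leq\alpha-\langle\bar c,\bar x\rangle$ and $\langle a_t,x\rangle-\bar b_t\leq b_t-\bar b_t\leq\|b-\bar b\|_\infty$, whence $f(x)\leq\|(\alpha,b)-(\langle\bar c,\bar x\rangle,\bar b)\|$; conversely, setting $\hat\alpha:=\max\{\langle\bar c,x\rangle,\langle\bar c,\bar x\rangle\}$ and $\hat b_t:=\max\{\bar b_t,\langle a_t,x\rangle\}$ places $x$ in $\mathcal{L}(\hat\alpha,\hat b)$ with $\|(\hat\alpha,\hat b)-(\langle\bar c,\bar x\rangle,\bar b)\|=f(x)$ (for $x$ near $\bar x$, where $f(x)\geq 0$). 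Thus the sharp-minimum estimate \eqref{D3.11-1} for $f$ and the isolated-calmness estimate \eqref{D3.1-2} for $\mathcal{L}$ hold with identical moduli. The equivalence (iv) $\Leftrightarrow$ (v) then follows from Proposition~\ref{P3.13} (an equality in finite dimension) together with Proposition~\ref{P2.8}(iv).

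Next I would close the loop (i) $\Leftrightarrow$ (ii) $\Leftrightarrow$ (iii). The implication (i) $\Rightarrow$ (ii) is immediate by restricting to $c=\bar c$. For (ii) $\Rightarrow$ (iii) I argue by contraposition using Lemma~\ref{marco7}; its hypothesis $\mathcal{S}(\bar c,\bar b)=\{\bar x\}$ is granted because (ii) isolates $\bar x$ locally in the \emph{convex} set $\mathcal{S}(\bar c,\bar b)$, which forces global uniqueness. The lemma then produces $(b_k,x_k)\to(\bar b,\bar x)$ with $x_k\in\mathcal{F}(b_k)$, $\|b_k-\bar b\|_\infty/\|x_k-\bar x\|^q\to 0$, and multipliers $\gamma_t>0$ on $T_0\subset T_{b_k}(x_k)$ satisfying $-\bar c=\sum_{t\in T_0}\gamma_t a_t$; for any $y\in\mathcal{F}(b_k)$, $\langle-\bar c,y-x_k\rangle=\sum_{t\in T_0}\gamma_t(\langle a_t,y\rangle-b_{k,t})\leq 0$, so $x_k\in\mathcal{S}_{\bar c}(b_k)$, contradicting (ii). For (iii) $\Rightarrow$ (i) I invoke the Hoffman-type bound $d(\bar x,\mathcal{F}(b))\leq K\|b-\bar b\|_\infty$ that follows from the Slater condition: given $x\in\mathcal{S}(c,b)\cap U$, choose $\tilde x\in\mathcal{F}(b)$ with $\|\tilde x-\bar x\|\leq K\|b-\bar b\|_\infty$; then $\langle c,x\rangle\leq\langle c,\tilde x\rangle$ yields $\langle\bar c,x-\bar x\rangle\leq C\|(c,b)-(\bar c,\bar b)\|$, so $x\in\mathcal{L}(\alpha,b)$ with $\alpha:=\langle\bar c,x\rangle$ and $\|(\alpha,b)-(\langle\bar c,\bar x\rangle,\bar b)\|\leq C'\|(c,b)-(\bar c,\bar b)\|$, at which point (iii) delivers (i).

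Finally, (ii) $\Rightarrow$ (vi) is Corollary~\ref{T1.6}. When $T$ is finite, $C(T,\mathbb R)$ is finite-dimensional and Corollary~\ref{T1.6} supplies (vi) $\Rightarrow$ (ii), while the modulus equality \eqref{T4.2-1} is Corollary~\ref{P3.1}. For $q>1$, the equivalence of (iv) and (vii) comes from Theorem~\ref{T1.12} applied to the convex continuous function $f$ with order parameter $q-1>0$: sharpness of order $q=(q-1)+1$ is exactly $(q-1)$-order positive definiteness of $D_{q-1}\partial f(\bar x,0)$. The main obstacles will be the Hoffman/Slater estimate underlying (iii) $\Rightarrow$ (i) and the verification that the multipliers furnished by Lemma~\ref{marco7} genuinely certify $x_k\in\mathcal{S}_{\bar c}(b_k)$; once those are in place, the remaining implications are bookkeeping.
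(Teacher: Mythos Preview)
Your proposal is correct and tracks the paper's proof closely for most implications: the treatments of (i) $\Rightarrow$ (ii), (ii) $\Rightarrow$ (iii) via Lemma~\ref{marco7}, (iv) $\Leftrightarrow$ (v), (ii) $\Leftrightarrow$ (vi), and (iv) $\Leftrightarrow$ (vii) are essentially identical to the paper's, and you even add two points the paper glosses over---the verification that $\mathcal S(\bar c,\bar b)=\{\bar x\}$ (needed to invoke Lemma~\ref{marco7}) follows from (ii) by convexity of the solution set, and the explicit two-sided inequality establishing (iii) $\Leftrightarrow$ (iv) with equal moduli, where the paper just writes ``follows from comparing Definition~\ref{D3.1}(ii) and Definition~\ref{D3.11} in view of \eqref{barf} and \eqref{marco6}.''

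The one genuine divergence is how you return to (i). The paper proves (iv) $\Rightarrow$ (i) by invoking \cite[Lemma~4.2]{KruLopYanZhu19} to get a uniform multiplier bound $M$ on the KKT representation \eqref{T4.1P2}, and then estimates $f(x)$ directly in terms of $\|(c,b)-(\bar c,\bar b)\|$. You instead prove (iii) $\Rightarrow$ (i) via Lipschitz lower semicontinuity of $\mathcal F$ at $(\bar b,\bar x)$ under Slater (your ``Hoffman-type bound'' $d(\bar x,\mathcal F(b))\leq K\|b-\bar b\|_\infty$), using a feasible comparison point $\tilde x$ to control $\langle\bar c,x-\bar x\rangle$ and hence the level-set perturbation. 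Both routes are sound; the paper's buys a slightly sharper modulus estimate tied to the multiplier bound, while yours is more self-contained and avoids the external lemma at the price of appealing to the (standard) Slater $\Rightarrow$ Lipschitz-l.s.c.\ fact for $\mathcal F$.
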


\begin{proof}
(i) $\Rightarrow$ (ii) is immediate from Definition~\ref{D3.1}(ii) in view of
the definition of $\mathcal{S}_{\bar{c}}$.

(ii) $\Rightarrow$ (iii). Suppose that $\mathcal{L}$ does not possess
$q$-order isolated calmness property at $((\langle\bar{c},\bar{x}\rangle
,\bar{b}),\bar{x})$. To reach a contradiction with (ii), it suffices to show
that, for the sequence $\{(b_{k},x_{k})\}\subset\mathrm{gph}\,\mathcal{F}$ in
Lemma \ref{marco7}, it holds $x_{k}\in\mathcal{S}_{\bar{c}}(b_{k})$,
$k\in\mathbb{N}$, which readily follows from the KKT conditions (\ref{3.17})
(by continuity, it is not restrictive to assume that $P(\bar{c},b_{k})$
satisfies the Slater condition).

(iii) $\Leftrightarrow$ (iv) follows from comparing Definition~\ref{D3.1}(ii)
and Definition~\ref{D3.11} in view of \eqref{barf} and \eqref{marco6}.

(iv) $\Rightarrow$ (i). By \cite[Lemma 4.2]{KruLopYanZhu19}
(with $f\equiv0$),
there exist a number $M>0$ and neighbourhoods $U$ of
$\bar{x}$ and $V$ of $(\bar{c},\bar{b})$ such that
\begin{equation}
-c\in\lbrack0,M]\ \mathrm{co}\left\{  a_{t},\ t\in T_{b}\left(  x\right)
\right\}  \quad\text{for all }(c,b)\in V\text{ and }x\in\mathcal{S}(c,b)\cap
U,\label{T4.1P1}%
\end{equation}
where `co' stands for the convex hull. Let $\bar{x}$ be a $q$-order sharp
minimizer of $f$. By Definition~\ref{D3.11}, condition \eqref{D3.11-1} holds
with some number $\tau>0$ and a smaller neighbourhood $U$ if necessary.
Without loss of generality, we assume that $M>1$, and $U$ is bounded: $\Vert
x-\bar{x}\Vert<\delta$ for some $\delta>0$ and all $x\in U$. Let $(c,b)\in V$
and $x\in\mathcal{S}(c,b)\cap U$. By \eqref{T4.1P1},
\begin{equation}
-c=\sum_{t\in T_{b}\left(  x\right)  }\eta_{t}a_{t},\label{T4.1P2}%
\end{equation}
for some $\eta_{t}\geq0,\ t\in T_{b}\left(  x\right)  ,$ satisfying
$\sum_{t\in T_{b}\left(  x\right)  }\eta_{t}\leq M$ and only finitely many
being positive. Hence, in view of representation \eqref{T4.1P2}, and
definitions \eqref{F} and \eqref{S},
\[
\langle c,x-\bar{x}\rangle=-\sum_{t\in T_{b}\left(  x\right)  }\eta
_{t}\left\langle a_{t},x-\bar{x}\right\rangle \leq\sum_{t\in T_{b}\left(
x\right)  }\eta_{t}(\bar{b}_{t}-b_{t})\leq M\Vert b-\bar{b}\Vert_{\infty},
\]
Recalling definition \eqref{barf} and the fact that $f(\bar{x})=0$, we have
\begin{align*}
\tau\left\Vert x-\bar{x}\right\Vert ^{q}\leq f(x) &  \leq\max\{\langle\bar
{c},x-\bar{x}\rangle,\;\max_{t\in T}(b_{t}-\bar{b}_{t})\}\\
&  \leq\max\{\langle c,x-\bar{x}\rangle+\Vert c-\bar{c}\Vert\Vert x-\bar
{x}\Vert,\;\Vert b-\bar{b}\Vert_{\infty}\}\\
&  \leq\max\{M\Vert b-\bar{b}\Vert_{\infty}+\delta\Vert c-\bar{c}\Vert,\;\Vert
b-\bar{b}\Vert_{\infty}\}\\
&  \leq(M+\delta)\Vert(c,b)-(\bar{c},\bar{b})\Vert.
\end{align*}
By Definition~\ref{D3.1}(ii), $\mathcal{S}$ possesses $q$-order isolated
calmness property at $((\bar{c},\bar{b}),\bar{x})$. (iv) $\Leftrightarrow$ (v)
is immediate from Proposition~\ref{P3.13}. (ii) $\Rightarrow$ (vi) and the
opposite implication when $T$ is finite, together with the equality
\eqref{T4.2-1} follow from Corollaries~\ref{P3.1} and \ref{T1.6}. It suffices
to notice that, when $T$ is finite, the parameter space $C(T,\mathbb{R})$ is
finite-dimensional. When $q>1$, the equivalence (iv) $\Leftrightarrow$ (vii)
is a consequence of Theorem~\ref{T1.12}. \qed
\end{proof}

\begin{remark}
Implication (iii) $\Rightarrow$ (i) in Theorem~\ref{T4.2} is a consequence of
\cite[Corollary 3]{KlaKum} and the fact that, under the Slater condition, $\mathcal{F}$ is calm and Lipschitz \lsc.

In the case $q\geq1$, implication (iv) $\Rightarrow$ (i) is
a special case of \cite[Theorem 2.2]{Kla94}.
For the semi-infinite
optimization model $P(c,b),$ this implication was explicitly given, e.g., in
\cite[Proposition 4.2]{KlaHen98}.
Indeed, $\bx$ is a
$q$-order sharp minimizer of $f$, then, using the notation of Definition~\ref{D3.1},
one has in particular%
\[
\tau\left\Vert x-\bx\right\Vert ^{q}\leq\left\langle c,x-\bx\right\rangle ,\ \text{\ for all }x\in\mathcal{F}(\bar{b})\cap U,
\]
i.e.,  $\bx$ is a strict local minimizer of $P(\bar{c},\bar{b})$ in the sense of \cite{KlaHen98}. Since the Slater
condition is equivalent to the extended Mangasarian-Fromovitz CQ
(for
this equivalence in relation to the linear SIP problem $P(c,b)$ see, e.g.,
\cite[Theorem~6.1]{GobLop98.2} and \cite[Theorem~2.1]{CanDonLopPar05}), \cite[Proposition 4.2]{KlaHen98} applies and
gives (in particular) that $\mathcal{S}$ possesses the $q$-order isolated
calmness property at $((\bar{c},\bar{b}),\bx)$.
\end{remark}

\if{
\textbf{Additional References}

1) Klatte, D.: On quantitative stability for non-isolated minima. Control and
Cybernetics 23, 183-200 (1994).

2) Reemtsen, R., R\"{u}ckmann, J.-J., eds.: Semi-Infinite Programming.
Nonconvex Optimization and Its Applications, Vol. 25, Kluwer Academic Publ.,
Boston, Dordrecht, London

3) Klatte, D., Henrion, R.: Regularity and stability in nonlinear semiinfinite
optimization. In: Reemtsen-R\"{u}ckmann (1998), pages 69-102.

4) Goberna-L\'{o}pez (1998)
\'{}
Goberna, M.A., L%
\'{}%
opez, M.A.: A comprehensive survey of linear semiinfinite optimization theory.
In Reemtsen-R\"{u}ckmann (1998), pages 3-27.

5) C\'{a}novas, M.J., Dontchev, A.L., L\'{o}pez, M.A., Parra, J.: Metric
regularity of semi-infinite constraint systems. Math. Program., Ser. B 104,
329-346 (2005).
}\fi

Next we recall the
\emph{Extended N\"{u}rnberger Condition }(\emph{ENC, }in brief)
\cite[Definition~2.1]{CanHanLopPar08}.

\begin{definition}
ENC is satisfied at $((\bar{c},\bar{b}),\bar{x})$ when $P(\bar{c},\bar{b})\;$satisfies the Slater condition, and
there is no subset $D\subset T_{\bar{b}}(\bar{x})$ with$\;|D|<n\;$such that
$-\bar{c}\in \mathrm{cone}\left\{ a_{t},t\in D\right\}$.  \end{definition}

The following lemma is \cite[Theorem~2.1 and Lemma 3.1]{CanHanLopPar08}).

\begin{lemma}
\label{L6.1} Suppose that ENC is satisfied at $((\bar{c},\bar{b}),\bar{x})$. Then
\begin{enumerate}
\item
$\mathcal{S}$ is single valued and Lipschitz continuous in a
neighbourhood of $(\bar{c},\bar{b})$;
\item
if a sequence $\{((c_k,b_k),x_k)\}\subset\gph\mathcal{S}$ converges to $((\bar{c},\bar{b}),\bar{x})$, then $(b_k,x_k)\in \gph\mathcal{S}_{\bar{c}}$ for all $k$ large enough.
\end{enumerate}
\end{lemma}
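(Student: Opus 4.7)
The plan is to extract from ENC a nondegenerate basic KKT structure at $\bar x$, show that this structure persists under small joint perturbations of $(c,b)$, and use its strict positivity to pin down the solution mapping. Since $\bar x$ solves the convex program $P(\bar c,\bar b)$ and the Slater condition holds, KKT yields $-\bar c\in\mathrm{cone}\{a_t,\ t\in T_{\bar b}(\bar x)\}$. Carath\'eodory's theorem for cones compresses this to a linearly independent subset of size at most $n$, while ENC prohibits representations of size below $n$; hence there exists a set $D\subset T_{\bar b}(\bar x)$ with $|D|=n$, the vectors $\{a_t\}_{t\in D}$ form a basis of $\R^n$, and unique multipliers $\bar\gamma_t>0$ satisfy
\begin{equation*}
-\bar c=\sum_{t\in D}\bar\gamma_t a_t.
\end{equation*}

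For part (i), I would first establish uniqueness at $(\bar c,\bar b)$. For any $\tilde x\in\mathcal S(\bar c,\bar b)$, convex feasibility of the entire segment $[\bar x,\tilde x]$ gives $\ang{a_t,\tilde x-\bar x}\le 0$ for every $t\in D\subset T_{\bar b}(\bar x)$, while equality of optimal values gives $\ang{\bar c,\tilde x-\bar x}=0$. Substituting the KKT identity and exploiting $\bar\gamma_t>0$ forces $\ang{a_t,\tilde x-\bar x}=0$ for each $t\in D$, so the basis property yields $\tilde x=\bar x$. To extend single-valuedness to a neighbourhood, I would use that the $n\times n$ matrix $A_D$ with rows $a_t$, $t\in D$, remains invertible under continuous perturbation of $t\mapsto a_t$, so the candidate vertex $x(b):=A_D^{-1}\,b_D$ is affine in $b$. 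Expressing $-c$ in the basis $\{a_t\}_{t\in D}$ gives coefficients $\gamma_t(c)\to\bar\gamma_t>0$, which constitutes a valid KKT certificate at $x(b)$ for $P(c,b)$ once feasibility is checked. Uniqueness of the local solution then follows exactly as in the base case, and the affine dependence of $x(b)$ on $b$ (together with its independence of $c$) gives Lipschitz continuity of $\mathcal S$ near $(\bar c,\bar b)$.

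For part (ii), let $(c_k,b_k,x_k)\to(\bar c,\bar b,\bar x)$ with $x_k\in\mathcal S(c_k,b_k)$. Applying part (i) at $(c_k,b_k)$ yields $x_k=A_D^{-1}(b_k)_D$ for all sufficiently large $k$, so $D\subset T_{b_k}(x_k)$. The identity $-\bar c=\sum_{t\in D}\bar\gamma_t a_t$ is then a valid KKT certificate for $P(\bar c,b_k)$ at $x_k$, and KKT sufficiency for this convex linear program delivers $x_k\in\mathcal S(\bar c,b_k)=\mathcal S_{\bar c}(b_k)$.

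The main obstacle lies inside part (i): showing that for $(c,b)$ in a full neighbourhood of $(\bar c,\bar b)$ \emph{every} solution of $P(c,b)$ coincides with the vertex $A_D^{-1}b_D$, even in the degenerate case in which $T_{\bar b}(\bar x)$ is strictly larger than $D$. Strict positivity of the $\bar\gamma_t$ together with outer semicontinuity of the solution mapping (Berge) localises the analysis near $\bar x$, while linear independence of $\{a_t\}_{t\in D}$ rules out competing vertices; the extra active indices outside $D$ are controlled through the Slater condition, which makes $\mathcal F$ Lipschitz lower semicontinuous and so prevents the feasible set from collapsing under perturbation of $b$. This is the step at which ENC upgrades a pointwise nondegeneracy condition into robust rigidity of the active-index structure under simultaneous perturbation of $c$ and $b$.
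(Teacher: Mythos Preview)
The paper does not prove this lemma; it is quoted from \cite[Theorem~2.1 and Lemma~3.1]{CanHanLopPar08}, so there is no in-paper argument to compare against, and I assess your proposal on its own terms.

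There is a genuine gap. Your scheme fixes a single basic set $D\subset T_{\bar b}(\bar x)$ and asserts that the solution of $P(c,b)$ is $A_D^{-1}b_D$ for all nearby $(c,b)$; the Lipschitz claim in (i) and the inclusion $D\subset T_{b_k}(x_k)$ used in (ii) both rest on this. It fails whenever $T_{\bar b}(\bar x)\supsetneq D$: take $n=1$, $T=\{1,2\}$, $a_1=a_2=1$, $\bar b=(0,0)$, $\bar c=-1$, $\bar x=0$. ENC holds and $D=\{1\}$ is a legitimate choice with $\bar\gamma_1=1$, yet for $b=(\varepsilon,0)$ with $\varepsilon>0$ the point $A_D^{-1}b_D=\varepsilon$ is infeasible and the true solution is $0$, with active set $\{2\}$. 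Slater and Lipschitz lower semicontinuity of $\mathcal F$ do nothing to prevent this basis switch, so your final paragraph does not close the obstacle it names. For (ii) the mechanism that actually works lets the basis vary with $k$: take a Carath\'eodory KKT representation $-c_k=\sum_{t\in D_k}\gamma_t^k a_t$ with $D_k\subset T_{b_k}(x_k)$ and $\{a_t\}_{t\in D_k}$ linearly independent, use Slater-bounded multipliers and compactness of $T$ to pass to a cluster $-\bar c=\sum_i\gamma_i^\infty a_{t_i^\infty}$ with $t_i^\infty\in T_{\bar b}(\bar x)$, and invoke ENC to force $|D_k|=n$ and $\gamma^\infty>0$; then $A_{D_k}^{-1}(-\bar c)\to\gamma^\infty>0$ yields $-\bar c\in\mathrm{cone}\{a_t:t\in T_{b_k}(x_k)\}$, i.e.\ $x_k\in\mathcal S_{\bar c}(b_k)$. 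ENC controls \emph{every} limiting basic set, not a single pre-chosen one.
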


Thanks to Lemma \ref{L6.1}, we can show that the parameter $c$ can be
considered fixed in our analysis, provided that ENC holds at $((\bar{c}%
,\bar{b}),\bar{x})$.

\begin{theorem}
If ENC is satisfied at $((\bar{c},\bar{b}),\bar{x})$, then
$\clmq\mathcal{S}((\bar{c},\bar{b}),\bar{x})=\clmq%
\mathcal{S}_{\bar{c}}(\bar{b},\bar{x})$.
\end{theorem}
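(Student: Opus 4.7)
The plan is to establish the two inequalities $\clmq\mathcal{S}((\bar c,\bar b),\bar x)\le\clmq\mathcal{S}_{\bar c}(\bar b,\bar x)$ and the reverse separately, in each case showing that any modulus $\tau$ for isolated calmness on one side is also a modulus for isolated calmness on the other. The first direction is elementary and does not use ENC: given neighbourhoods $U$ of $\bar x$ and $V$ of $(\bar c,\bar b)$ witnessing $q$-order isolated calmness of $\mathcal{S}$ with modulus $\tau$ in the sense of Definition~\ref{D3.1}(ii), restricting to perturbations of the form $(\bar c,b)\in V$ yields $\tau\|x-\bar x\|^q\le\|(\bar c,b)-(\bar c,\bar b)\|=\|b-\bar b\|_\infty$ for $x\in\mathcal{S}_{\bar c}(b)\cap U$ and $b$ in a neighbourhood of $\bar b$. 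Hence $\tau$ is a modulus for $\mathcal{S}_{\bar c}$, and passing to the supremum gives the first inequality.

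For the reverse inequality, the key ingredient is Lemma~\ref{L6.1}(ii), which I would first upgrade to a neighbourhood form: there exist neighbourhoods $V_0$ of $(\bar c,\bar b)$ and $U_0$ of $\bar x$ such that every $((c,b),x)\in\gph\mathcal{S}$ with $(c,b)\in V_0$ and $x\in U_0$ satisfies $(b,x)\in\gph\mathcal{S}_{\bar c}$. This is a standard sequential contradiction argument: negating the uniform statement would produce a sequence contradicting Lemma~\ref{L6.1}(ii). Now, given a modulus $\tau>0$ for $\mathcal{S}_{\bar c}$ at $(\bar b,\bar x)$ with witnessing neighbourhoods $V'$ of $\bar b$ and $U'$ of $\bar x$, I would set $V:=V_0\cap(\mathbb{R}^n\times V')$ and $U:=U_0\cap U'$. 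For any $(c,b)\in V$ and any $x\in\mathcal{S}(c,b)\cap U$, the uniform step forces $x\in\mathcal{S}_{\bar c}(b)$, and then the calmness of $\mathcal{S}_{\bar c}$ gives
\[
\tau\|x-\bar x\|^q\le\|b-\bar b\|_\infty\le\max\{\|c-\bar c\|,\|b-\bar b\|_\infty\}=\|(c,b)-(\bar c,\bar b)\|.
\]
Thus $\tau$ is also a modulus for $\mathcal{S}$, and taking suprema yields the second inequality, completing the equality.

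The only genuine difficulty is the uniformization of Lemma~\ref{L6.1}(ii); once that neighbourhood version is in place, everything else is routine bookkeeping with restrictions of neighbourhoods, together with the fact that the maximum norm on $\mathbb{R}^n\times C(T,\mathbb{R})$ dominates its $C(T,\mathbb{R})$-component, so that the modulus is preserved exactly under both passages.
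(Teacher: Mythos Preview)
Your proof is correct and rests on the same key ingredient as the paper's: Lemma~\ref{L6.1}(ii) to transfer from $\mathcal{S}$ to $\mathcal{S}_{\bar c}$, together with the trivial inequality coming from restriction to $c=\bar c$. The only difference is in phrasing. The paper works sequentially: it invokes the characterization $\clmq\mathcal{S}((\bar c,\bar b),\bar x)=\lim_k\|(c_k,b_k)-(\bar c,\bar b)\|/\|x_k-\bar x\|^q$ along a suitable minimizing sequence in $\gph\mathcal{S}$, and then applies Lemma~\ref{L6.1}(ii) directly in its sequential form to get $(b_k,x_k)\in\gph\mathcal{S}_{\bar c}$ for large $k$, from which the reverse inequality drops out as a $\liminf$. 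You instead stay with the neighbourhood Definition~\ref{D3.1}(ii) and uniformize the lemma first. Your route avoids invoking the (implicit) sequential characterization of the modulus, at the cost of the extra contradiction step; the paper's route is shorter because the lemma is already stated sequentially. Neither buys anything substantive over the other.
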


\begin{proof}
It obviously holds $\clmq\mathcal{S}_{\bar{c}}(\bar{b},\bar{x})\geq \clmq\mathcal{S}((\bar{c},\bar{b}),\bar{x})$, and we need to show the opposite inequality.
If $\clmq\mathcal{S}((\bar{c},\bar{b}),\bar{x})=+\infty$, there is nothing to prove.
Let $\clmq\mathcal{S}((\bar{c},\bar{b}),\bar{x})<+\infty$.
Then
\sloppy
\begin{equation*}
\clmq\mathcal{S}((\bar{c},\bar{b}),\bar{x})=\lim_{k\rightarrow
+\infty }\frac{\Vert (c_k,b_k)-(\bar{c},\bar{b})\Vert }{\left\Vert
x_k-\bx\right\Vert^{q} }  \label{3.77}
\end{equation*}
for some sequence  $\{((c_k,b_k),x_k)\}\subset\gph\mathcal{S}$ such that $((c_k,b_k),x_k)\rightarrow((\bar{c},\bar{b}),\bx)$ and $x_k\neq \bar{x}$ for all $k\in\N$.
If ENC is satisfied at $((\bar{c},\bar{b}),\bar{x})$, then, by Lemma \ref{L6.1},
$(b_k,x_k)\in \gph\mathcal{S}_{\bar{c}}$ for all $k$ large enough.
Hence,
\begin{equation*}
\clmq\mathcal{S}((\bar{c},\bar{b}),\bar{x})\geq
\liminf_{k\to +\infty }\frac{\Vert b_k-\bar{b}\Vert _{\infty }%
}{\left\Vert x_k-\bar{x}\right\Vert^{q} }\geq \clmq\mathcal{S}%
_{\bar{c}}(\bar{b},\bar{x}).
\end{equation*}
This completes the proof.
\qed\end{proof}

\begin{example}
Consider the linear semi-infinite optimization problem in $\mathbb{R}^{2}$:
\begin{equation*}
\begin{aligned}
P(c,b):\quad& \text{minimize}
&&c_{1}x_{1}+c_{2}x_{2}\\
&\text{subject to} &&\left( \cos t\right) x_{1}+\left( \sin t\right)
x_{2}\leq b_{t},\;t\in[0,2\pi].
\end{aligned}
\end{equation*}
Let $\bar{c}:=(1,0)$ and $\bar{b}_{t}:=1,$ for all $t\in[0,2\pi].$
It is easy to check that $\bar{x}:=(-1,0)$ is the unique solution of $P\left( \bar{c},\bar{b}\right) .$
It obviously satisfies the Slater condition.
We are going to use condition (v) in Theorem~\ref{T4.2} to check the isolated calmness property of the solution mapping $\mathcal{S}$ of $P\left( \bar{c},\bar{b}\right) .$
The function \eqref{barf} takes the form
\begin{equation*}
f(x)=\max\left\{x_{1}+1,\sqrt{x_{1}^2+x_{2}^2}-1\right\}, \quad
x=(x_1,x_2)\in\R^2.
\end{equation*}
Obviously, $f(\bx)=0$ and, for any $x=(x_1,x_2)\in\R^2$ and $t>0$, we have
\begin{align*}
f(\bx+tx)&=\max\left\{tx_{1}, \sqrt{(tx_{1}-1)^2+(tx_{2})^2}-1\right\}
\\
&=t\max\left\{x_{1}, \frac{-2x_{1}+t(x_{1}^2+x_{2}^2)} {\sqrt{(tx_{1}-1)^2+(tx_{2})^2}+1}\right\}.
\end{align*}
Thus,
\begin{align}\label{E4.7-4}
f^{\prime}_{q}(\bar x;x)=\liminf_{(u_1,u_2)\to(x_1,x_2), \,t\downarrow0}t^{1-q}\max\left\{u_{1},-u_{1}+ \frac{t(u_{1}^2+u_{2}^2)} {2}\right\}\ge0.
\end{align}
If $x_1=0$ and $q<2$, then, by \eqref{E4.7-4},
\begin{align*}
f^{\prime}_{q}(\bar x;x)=\liminf_{u_1\to0, \,t\downarrow0}t^{1-q}|u_{1}|=0.
\end{align*}
and consequently, $\|f'_q(\bx;\cdot)\|_q=0$.
Thus, by Theorem~\ref{T4.2}, $\mathcal{S}$ does not possess $q$-or\-der isolated calmness property at $((\bar{c},\bar{b}),\bar{x})$ when $q<2$.
With $q=1$, this fact was established in \cite{CanDonLopPar09}.
Similarly, if $x_1=0$ and $q=2$, then
\begin{align*}
f^{\prime}_{2}(\bar x;x)=\liminf_{u_1\to0,\,t\downarrow0} \max\left\{\frac{u_1}{t},-\frac{u_1}{t}+ \frac{x_{2}^2} {2}\right\}=\inf_{\al\in\R}\max\left\{\al,-\al+ \frac{x_{2}^2}{2}\right\}=\frac{x_{2}^2}{4}.
\end{align*}
Finally, if $x_1\ne0$, it follows from \eqref{E4.7-4} that $f'_2(\bx;x)\ge\lim_{t\downarrow0}t\iv|x_1|=+\infty$.
Hence, $\|f'_2(\bx;\cdot)\|_2=\frac14>0$ and, by Theorem~\ref{T4.2}, $\mathcal{S}$ possesses $2$-order isolated calmness property at $((\bar{c},\bar{b}),\bar{x})$.
\qed\end{example}

\section{$q$-order sharp minimizers of $\ell_{p}$ penalty functions}

\label{S5}

In this section, we consider an inequality constrained optimization problem
\begin{gather}
\label{iop}\begin{aligned} & \text{minimize} && f(x)\\ &\text{subject to} &&g_i(x) \leq 0, \; i\in I:=\{1,\ldots,m\}, \end{aligned}
\end{gather}
where $f, g_{i} : \mathbb{R}^{n} \to\mathbb{R }\cup\{+\infty\}$, $i\in I$.
Given numbers $p > 0$ and $r > 0$, the $l_{p}$ penalty optimization problem
corresponding to (\ref{iop}) can be defined as follows:
\begin{equation}
\label{iopp}\text{minimize}\quad\ell_{p}(x) := f(x) + r \sum_{i=1}^{m}
(g^{+}_{i})^{p}(x),
\end{equation}
where ${g}^{+}_{i}(x):= \max\{0, g_{i}(x)\}$, $i \in I$.

By virtue of the optimal value function, relations between local minimizers of
(\ref{iop}) and (\ref{iopp}) were given in \cite{RubYan03,HuaYan03}. Below
$q$-order Hadamard directional subderivatives are used to identify $q$-order
sharp minimizers of the penalty problem (\ref{iopp}).

By Proposition \ref{P3.13} and Proposition \ref{P2.8}(iv), a point $\bar
x\in\cap_{i=1}^{m}\mathrm{dom}\, g_{i}\cap\mathrm{dom}\, f$ is a $q$-order
sharp minimizer of (\ref{iopp}) if and only if
\begin{equation}
\label{suff0}(\ell_{p})^{\prime}_{q}(\bar x;x) > 0 \quad\mbox{for all}\quad x
\ne0.
\end{equation}

Define
\begin{align*}
I(\bar x)  &  = \{ i \in I \ | \ g_{i}(\bar x) = 0 \},\\
K^{*}(\bar x)  &  = \{ x \in\mathbb{R}^{n} \ | \ \|x\| = 1, f^{\prime}%
_{q}(\bar x; x) \leq0\},\\
a(\bar x)  &  = \min\left\{  \sum_{i \in I(\bar x)} \left[  (g^{+}%
_{i})^{\prime}_{q/p}(\bar x; x) \right]  ^{p} \mid\ x \in K^{*}(\bar
x)\right\}  ,\\
b(\bar x)  &  = \min\{ f^{\prime}_{q}(\bar x; x) \ | \ \|x\| = 1\}.
\end{align*}

\begin{theorem} \label{T5.1}
\begin{enumerate}
\item
Suppose that $f'_q(\bx; \cdot)$ is proper and
\begin{equation} \label{suff}
f'_q(\bx; x) > 0
\qdtx{for all}
x \ne 0 \mbox{ with } (g^+_i)'_{q/p}(\bx; x) = 0, \ i \in I(\bx).
\end{equation}
Then $\bx$ is a $q$-order sharp minimizer of (\ref{iopp}) for all $r > \rho_0$,
where
\[ \rho_0:= \left\{ \begin{array}{ll} - b(\bx)/a(\bx), \ &\mbox{ if } K^*(\bx) \ne \emptyset,\\
0, \ &\mbox{ otherwise}. \end{array} \right. \]
\item
Suppose that $f$ is $q$-order Hadamard directionally
differentiable at $\bx$, $m=1$ and $\bx$ is a $q$-order sharp minimizer of (\ref{iopp}) for some $r>0$. Then (\ref{suff}) holds.
\end{enumerate}
\end{theorem}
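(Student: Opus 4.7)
The plan is to exploit the characterization of sharp minimizers via directional subderivatives already recorded in (\ref{suff0}): $\bar x$ is a $q$-order sharp minimizer of $\ell_p$ if and only if $(\ell_p)'_q(\bar x; x) > 0$ for every $x \ne 0$. Both parts therefore reduce to analyzing $(\ell_p)'_q(\bar x; \cdot)$. The central computational identity is
\begin{equation*}
((g^+_i)^p)'_q(\bar x; x) = \bigl[(g^+_i)'_{q/p}(\bar x; x)\bigr]^p \qdtx{for} i \in I(\bar x),
\end{equation*}
which holds since $g^+_i(\bar x) = 0$, $g^+_i \geq 0$, and $z \mapsto z^p$ is a continuous increasing bijection of $[0,\infty)$ that commutes with the $\liminf$ of a nonnegative quantity. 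For $i \notin I(\bar x)$, continuity of $g_i$ makes $g^+_i$ vanish near $\bar x$, contributing $0$ to $(\ell_p)'_q$.

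For part~(i), the superadditivity of $\liminf$, valid because the penalty summands are nonnegative and $f'_q(\bar x;\cdot)$ is proper, yields
\begin{equation*}
(\ell_p)'_q(\bar x; x) \geq f'_q(\bar x; x) + r \sum_{i \in I(\bar x)} \bigl[(g^+_i)'_{q/p}(\bar x; x)\bigr]^p.
\end{equation*}
By $q$-order positive homogeneity it suffices to argue positivity for $\|x\|=1$. If $x \notin K^*(\bar x)$ then $f'_q(\bar x; x) > 0$ and the bound is positive for any $r \geq 0$; this in particular handles the case $K^*(\bar x) = \emptyset$, where $\rho_0 = 0$. Otherwise $x \in K^*(\bar x)$, and hypothesis (\ref{suff}) forces some $(g^+_i)'_{q/p}(\bar x; x) > 0$, so the penalty term is strictly positive on every point of $K^*(\bar x)$. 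Lower semicontinuity of each $(g^+_i)'_{q/p}(\bar x;\cdot)$, hence of the sum, together with compactness of $K^*(\bar x) \subset \R^n$, guarantees that $a(\bar x)$ is an attained and strictly positive minimum, whence the bound is $\geq b(\bar x) + r\, a(\bar x) > 0$ exactly when $r > -b(\bar x)/a(\bar x) = \rho_0$.

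For part~(ii), $q$-order Hadamard directional differentiability of $f$ lets Corollary~\ref{P2.9} upgrade the above inequality to an equality
\begin{equation*}
(\ell_p)'_q(\bar x; x) = f'_q(\bar x; x) + r \bigl[(g^+_1)'_{q/p}(\bar x; x)\bigr]^p,
\end{equation*}
applied with the differentiable summand $f$ playing the role of the perturbation. Since $\bar x$ is a $q$-order sharp minimizer of $\ell_p$, the left-hand side is positive for every $x \ne 0$; specialising to any $x \ne 0$ with $(g^+_1)'_{q/p}(\bar x; x) = 0$ collapses the equality to $f'_q(\bar x; x) > 0$, which is precisely (\ref{suff}) for $m=1$. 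The main obstacle is the careful justification of the identity $((g^+_i)^p)'_q = [(g^+_i)'_{q/p}]^p$ — an interchange of $\liminf$ with the nonlinear map $z \mapsto z^p$ — and the correct invocation of Corollary~\ref{P2.9} in part~(ii) with the \emph{differentiable} summand as the perturbation; a secondary technical point in part~(i) is verifying that $a(\bar x)$ is a strictly positive attained minimum, which relies on lower semicontinuity together with finite-dimensional compactness of $K^*(\bar x)$.
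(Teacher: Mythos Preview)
Your proposal is correct and follows essentially the same route as the paper's proof: the identity $((g^+_i)^p)'_q(\bar x;x)=[(g^+_i)'_{q/p}(\bar x;x)]^p$ for $i\in I(\bar x)$, the superadditive lower bound \eqref{suff2}, the case split on $K^*(\bar x)$ with the estimate $b(\bar x)+r\,a(\bar x)>0$, and the use of Corollary~\ref{P2.9} for equality in part~(ii). Your argument is in fact slightly more explicit than the paper's in justifying $a(\bar x)>0$ via lower semicontinuity of the subderivatives and compactness of $K^*(\bar x)\subset\{\|x\|=1\}$, which the paper simply asserts.
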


\begin{proof}
\begin{enumerate}
\item
If $K^*(\bx) \ne \emptyset$, then $a(\bx) > 0$ and $b(\bx)  \leq 0.$
Therefore $\rho_0$ is well defined and nonnegative.
Let $r > \rho_0$.
Let $x \ne 0$.
As $(\ell_p)'_q(\bx; \cdot)$ is positively homogeneous, without loss of generality, we assume that $\|x\|=1$.
Obviously, $(g^+_i)'_{q/p}(\bx; x) \geq 0$ for all $i\in I$.
It is easy to show that
\begin{equation*} 
([g^{+}_i]^p)'_q(\bx;x) = \left[ (g^+_i)'_{q/p}(\bx; x)\right]^p
\qdtx{for all}
i \in I(\bx).
\end{equation*}
Since  $f'_q(\bx; \cdot)$ is proper, it follows that
\begin{equation} \label{suff2}
(\ell_p)'_q(\bx;x) \geq f'_q(\bx;x) + r \sum_{i \in I(\bx)} \left[(g^+_i)'_{q/p}(\bx; x) \right]^{p}.
\end{equation}
If $x \not\in K^*(\bx)$, we have $f'_q(\bx;x) > 0$, and consequently, $ (\ell_p)'_q(\bx;x) > 0$.
If $x \in K^*(\bx)$, then
$b(\bx) \leq 0$.
So, by definitions of $\rho_0$ and $a(\bx)$, we have
\[ r \sum_{i \in I(\bx)}  \left[(g^+_i)'_{q/p}(\bx; x)\right]^p > - b(\bx),\]
and thus it follows from (\ref{suff2}) that
\[ (\ell_p)'_q(\bx;x) > f'_q(\bx;x) -b(\bx) \geq 0.\]
So, by (\ref{suff0}), $\bx$ is a $q$-order sharp minimizer for (\ref{iopp}).
\item
It follows from Corollary \ref{P2.9} that (\ref{suff2}) holds as equality. The conclusion is verified by Proposition \ref{P3.13} and Proposition \ref{P2.8} (iv).
\qed\end{enumerate}
\end{proof}

Condition (\ref{suff}) in Theorem~\ref{T5.1} uses $(g^{+}_{i})^{\prime}%
_{q/p}(\bar x;\cdot)$, which allows the treatment of $q$-order sharp
minimizers of rather general penalty functions. When $p=q=1$,
Theorem~\ref{T5.1}(i) is a consequence of \cite[Theorem~4.1]{StuWar99}.

Furthermore, for all $i \in I(\bar x)$, $u\in\R^n$ and $t>0$, we obviously have
\begin{align*}
\frac{g^+_i(\bar x+tu)-g_i^+(\bar x)}{t^{q/p}}=&
\frac{g^+_i(\bar x+tu)}{t^{q/p}}=
\frac{\max \left\{0,g_i(\bar x+tu)\right\}}{t^{q/p}}\\=&
\max \left\{0,\frac{g_i(\bar x+tu)}{t^{q/p}}\right\}=
\max \left\{0,\frac{g_i(\bar x+tu)-g_i(\bar x)}{t^{q/p}}\right\}.
\end{align*}
Hence, if $g_{i}$ $(i \in I(\bar x))$ is $(q/p)$-order Hadamard directionally differentiable at $\bar x$, then so is $g_{i}^+$, and
\begin{equation*}
(g^{+}_{i})^{\prime}_{q/p} (\bar x;x) = \max\{ 0, (g_{i})^{\prime}_{q/p}(\bar
x;x)\}
\end{equation*}
for all $x\in\R^n$.
Therefore, if all $g_{i}$ $(i \in I(\bar x))$ are $(q/p)$-order Hadamard directionally differentiable at $\bar x$, then (\ref{suff}) is equivalent
to the following condition:
\[
f^{\prime}_{q}(\bar x; x) > 0 \quad\mbox{for all}\quad x \ne0 \mbox{ with }
(g_{i})^{\prime}_{q/p}(\bar x; x) \leq0, \ i \in I(\bar x).
\]

The following simple example shows the calculation of the exact upper bound of
the 1-order sharp minimizer of the penalty problem (\ref{iopp}).

\begin{example}
Consider the following problem on $\R$:
\begin{gather*}
\text{minimize  }x
\quad
\text{subject to  } x^{2s} \leq 0,
\end{gather*}
where $s > 0$.
Obviously
$\bx=0$ is a minimizer of this problem.
With any $p>0$,
we have
\[ \mbox{shrp}_1 \ell_{p}(0) =
\left\{
\begin{array}{ll} + \infty, & \mbox{ if } sp<1/2,\\
r+1, & \mbox{ if } sp=1/2,\\
0, & \mbox{ otherwise. }
\end{array}  \right.
\]
\end{example}

\section*{Acknowledgement}

The authors wish to thank the referees and the handling editor for their careful reading of the manuscript and valuable comments and suggestions, particularly concerning the references.
The very constructive referee reports have helped us improve the presentation.

\addcontentsline{toc}{section}{References}

\bibliographystyle{spmpsci}
\bibliography{buch-kr,kruger,kr-tmp}

\end{document}